\newcommand{\bbN}{\mathbb{N}}
\newcommand{\bbR}{\mathbb{R}}
\newcommand{\bbZ}{\mathbb{Z}}
\newcommand{\calA}{\mathcal{A}}
\newcommand{\calH}{\mathcal{H}}
\newcommand{\calR}{\mathcal{R}}
\newcommand{\calT}{\mathcal{T}}
\renewcommand{\i}{\mathrm{i}}
\newcommand{\e}{\mathrm{e}}
\DeclareMathOperator{\one}{{\mathbbm{1}}} 
\newcommand{\argument}{\mathord{\,\cdot\,}} 
\newcommand{\dx}{\;\mathrm{d}} 
\newcommand{\norm}[1]{\left\lVert #1 \right\rVert} 
\newcommand{\modulus}[1]{\left\lvert #1 \right\rvert} 
\newcommand{\duality}[2]{\left\langle#1\, ,\, #2\right\rangle} 
\newcommand{\dom}[1]{\operatorname{dom}\left(#1\right)} 
\DeclareMathOperator{\ran}{range} 
\newcommand{\resSet}{\rho}
\newcommand{\Res}{\mathcal{R}} 
\theoremstyle{definition}
\newtheorem{definition}{Definition}[section]
\newtheorem{remark}[definition]{Remark}
\newtheorem*{remark*}{Remark}
\newtheorem*{remarks*}{Remarks}
\newtheorem{example}[definition]{Example}
\theoremstyle{plain}
\newtheorem{proposition}[definition]{Proposition}
\newtheorem{lemma}[definition]{Lemma}
\newtheorem{theorem}[definition]{Theorem}
\numberwithin{equation}{section} 
\DeclarePairedDelimiter{\ceil}{\lceil}{\rceil}
\DeclarePairedDelimiter{\floor}{\lfloor}{\rfloor}
\newcommand{\eps}{\varepsilon}
\newcommand{\R}{\mathbb{R}}
\newcommand{\Z}{\mathbb{Z}}
\newcommand{\N}{\mathbb{N}}
\newcommand{\odd}{\text{odd}}
\newcommand{\even}{\text{even}}
\begin{document}

\title[An example for semi-uniform stability]{A universal example for quantitative semi-uniform stability}

\author[Arora]{Sahiba Arora \orcidlink{0000-0003-1973-8358}}
\address{Sahiba Arora, Department of Applied Mathematics, University of Twente, 217, 7500 AE, Enschede, The Netherlands.
Current address: Leibniz Universität Hannover, Institut für Analysis, Welfengarten 1, 30167 Hannover, Germany}
\email{sahiba.arora@math.uni-hannover.de}

\author[Schwenninger]{Felix L.~Schwenninger \orcidlink{0000-0002-2030-6504}}
\address{Felix L. Schwenninger, Department of Applied Mathematics, University of Twente, 217, 7500 AE, Enschede, The Netherlands}

\email{f.l.schwenninger@utwente.nl}

\author[Vukusic]{Ingrid Vukusic \orcidlink{0000-0002-2499-3401}}
\address{Ingrid Vukusic, Mathematics Department, University of Salzburg, Hellbrunnerstrasse 34, 5020 Salzburg, Austria.
Current address:  Department of Mathematics,
University of York,
York, North Yorkshire YO10 5GH,
United Kingdom.}
\email{ingrid.vukusic@york.ac.uk}

\author[Waurick]{Marcus Waurick \orcidlink{0000-0003-4498-3574}}
\address{Marcus Waurick, Institut für Angewandte Analysis, Technische Universität Bergakademie Freiberg, Prüferstraße 9, 09599 Freiberg, Germany}
\email{marcus.waurick@math.tu-freiberg.de}
\subjclass[MSC2020]{Primary	35B35  	(Stability in context of PDEs) Secondary 	93D20 (Asymptotic stability in control theory), 	35L04 (Initial-boundary value problems for first-order hyperbolic equations), 	11Jxx (Diophantine approximation, transcendental number theory)}
\keywords{port-Hamiltonian systems, semi-uniform stability, $C_0$-semigroups, algebraic decay, diophantine approximation, metrical theory for continued fractions}
\date{\today}
\begin{abstract}
    We characterise quantitative semi-uniform stability for $C_0$-semi\-groups arising from port-Hamiltonian systems, complementing recent works on exponential and strong stability. With the result, we present a simple universal example class of port-Hamiltonian $C_0$-semigroups exhibiting arbitrary decay rates slower than $t^{-1/2}$.
    The latter is based on results from the theory of Diophantine approximation as the decay rates will be strongly related to approximation properties of irrational numbers by rationals given through cut-offs of continued fraction expansions.
\end{abstract}

\maketitle

\section{Introduction}

Over the past fifty years operator semigroups have been established as an important framework in the context of evolution equations. While the theory may seem rather classical and largely developed, Borichev--Tomilov's seminal work \cite{BorichevTomilov2010} on \emph{polynomial stability} from 2010, inspired by an earlier result by Batty--Duyckaerts \cite{BattyDuyckaerts2008}, initiated a tremendously active area of research. The interest in \emph{semi-uniform stability}, which covers polynomial stability, arises from sharply quantifying energy-decay along classical solutions of linear partial differential equations (PDEs), where the stronger uniform exponential stability fails, but strong stability holds. Previously, this had been quantified on an ad-hoc basis, e.g.\ by exploiting the spectral decomposition of the involved operators \cite{zuazuazhang}.
Particular interest in such abstract operator-theoretic tools arises in wave equations, where different types of damping -- for instance depending on the geometry -- heavily influence asymptotic stability and accurate decay rates are desirable, see \cite{ChillSeifertTomilov2020} and the references therein. 

The success of Borichev--Tomilov's result and its consequences also lies in the abstract framework, being characteristic of the power of operator theory, from which explicit rates arise naturally from estimates over the resolvent operator, encoding hard analysis from PDEs. 
The theory of \emph{port-Hamiltonian systems} has a similar history. It developed from the paradigm that the energy flow should guide the modelling of dynamical systems, typically governed through Hamilton's principle, thereby formalising the engineering formalism represented by \emph{bond graphs}. 
In the last twenty years, 
port-Hamiltonian systems have emerged as a cornerstone framework for modelling and analysing physical phenomena, particularly those that conserve or dissipate energy. Since van der Schaft's seminal work in \cite{vanderSchaft2006}, this framework has sparked a flurry of research and development across various domains, see \cite{JacobZwart2018} for a survey. By describing physical systems through partial differential equations and boundary conditions that define ``ports'', port-Hamiltonian systems have become exemplary boundary control systems, especially for hyperbolic PDEs. We also note the close relation of port-Hamiltonian systems to hyperbolic balance laws for one spatial dimension.

The starting point of this article is the question of \emph{ ``How can we quantify decay rates of semi-uniformly stable port-Hamiltonian systems?''}. More precisely, we focus on the seemingly restrictive case of port-Hamiltonian systems with a one-dimensional spatial variable in the spirit of \cite{JacobZwart2012}. However, subtleties in assessing sharp decay rates are already present in such one-dimensional domains and are well documented in the literature \cite{ammaritucsnak,RzepnickiSchnaubelt2018,zuazuazhang}. Here, we fully characterise semi-uniform stability for this setting  and even show that the class allows for rather arbitrary decay rates.

In particular, we revisit the stability problem for port-Hamiltonian systems, focusing on providing a quantified estimate of the resolvent growth function, which, as mentioned above,  plays a significant role in characterising semi-uniform stability, see \cite{ChillSeifertTomilov2020} for a survey.  Our primary objective is to derive an estimate in terms of the matrix norm of a suitable inverse of a derived quantity, that has previously played a role in characterising exponential and strong stability of port-Hamiltonian systems in \cite{TrostorffWaurick2023} and \cite{WaurickZwart2023} respectively. This term encodes all (matrix) parameters which determine a port-Hamiltonian system in one spatial dimension. Such conditions also highlight the advantages of the (1-D-)port-Hamiltionian framework as this allows to reduce properties of a PDE to mere estimates on the algebraic building blocks. The structural insight developed in this first part then leads to the construction of a simple example showcasing algebraic decay with sharp decay rate estimates. To the best of our knowledge, this is indeed the simplest example admitting more complex decay behaviour than exponential available in the class of time-dependent partial differential equations and, thus, remarkable on its own.

Throughout this manuscript, a function $\psi$ is called \emph{increasing}, if $s\leq t$ implies $\psi(s)\leq \psi(t)$; $\psi$ is \emph{decreasing}, if $-\psi$ is increasing. 
In what follows, for an increasing function $\gamma:\bbR_+ \to (0,\infty)$, we write $\gamma^{-1}$ for the right-continuous \emph{generalized inverse} of $\gamma$ which is given by
\begin{align*}
    \gamma^{-1}(y)\coloneqq \sup\{ x\ge 0 : \gamma(x)\le y\}.
\end{align*}
We refer to \cite{Wacker2023} for properties of generalized inverses. 

\subsection*{Organisation of the article}

The paper is organised as follows:~In Section~\ref{sec:port-hamiltonian}, we discuss  port-Hamiltonian systems and present the first main result characterising their polynomial stability. In particular, we obtain decay rates for our semigroup entirely in terms of the matrix norm of a suitable inverse associated with the corresponding port-Hamiltonian system. 

In Section~\ref{sec:exaeasy} we use the characterization from Section~\ref{sec:port-hamiltonian} to show, for instance, that for (almost) any decay rate slower than $t^{-1/2}$ there is an example in  the class of one-dimensional port-Hamiltonian systems admitting this decay rate. For this, we rely on known operator-theoretic results, that is, quantitative decay rates for $C_0$-semigroups, as well as results from Diophantine approximation. These are collected in Appendices~\ref{appendix:stability-background} and \ref{appendix:diophantine} respectively, since the precise formulations of these  results are instrumental for the proofs of our decay rates.
Note that the necessity of Diophantine approximations in this context does not appear to be a coincidence, but rather a general feature of the (algebraic) stability analysis of $C_0$-semigroups. Indeed, connections to Diophantine approximations,  dynamical systems and number theory have been observed in the context of decay of orbits of solutions of time-dependent PDEs, for instance, in \cite[Section~3]{RzepnickiSchnaubelt2018}, \cite[Section~6B]{CPSST23}, and \cite[Sections~4 and 5]{WaurickZwart2023}. 

Finally, Sections~\ref{sec:proof-square},~\ref{sec:proof-othertypes}, and~\ref{sec:posinc} contain the proofs of the number-theoretic results stated and used in Section~\ref{sec:exaeasy}. For these proofs, we require a means to quantify the error for odd/odd rational approximations of irrational number~(Section~\ref{sec:ooappr}).

\section{Resolvent norms on $i\mathbb{R}$ for Port-Hamiltonian Operators}
    \label{sec:port-hamiltonian}

Fix $d\in \bbN$. Let $\calH:[a,b]\to \bbR^{d\times d}$ be a measurable function with $\calH(t)^\top=\calH(t)$ for a.e.\ $t\in[a,b]$, and which is assumed to be bounded above and strictly bounded away from $0$ in the sense of positive definiteness. The Hilbert space $H\coloneqq L^2([a,b]; \bbR^d)$ is equipped with the norm
\[
    \norm{\argument}_H : = \norm{\calH^{1/2}\argument}_{L^2}.
\]
Let $P_0, P_1 \in \bbR^{d\times d}$ be such that $P_0^\top=-P_0$, $P_1$ is self-adjoint and invertible. In addition, let $W\in\R^{d\times 2d}$ have full rank.

On $H$ we define the operator $\calA$ by
\begin{align}
    \label{eq:original-gernerator}
    \begin{split}
        \dom{\calA} &= \left\{ u \in H: \calH u \in H^1([a,b];\bbR^d), W \begin{bmatrix} (\calH u)(b) \\ (\calH u)(a) \end{bmatrix} = 0 \right\}\\
    \calA u    &= P_1(\calH u)' +P_0 \calH u.
    \end{split}
\end{align}
We further assume that $\calA$ is dissipative, i.e., $\Re\langle \calA x,x\rangle\leq 0$ for all $x\in \dom{\calA}$, which can equivalently be rephrased by a matrix inequality involving $P_{1}$ and $W$, see e.g.\  \cite[Theorem~2.4]{TrostorffWaurick2023}.
Consequently, $-\calA$ generates a strongly continuous semigroup of contractions $\calT$ on $H$. Moreover, $\calA$ has compact resolvent and hence, its spectrum consists only of eigenvalues (of finite multiplicity)  \cite[Theorem~2.2 and Lemma~3.3]{WaurickZwart2023}. 
Under these assumptions, we will loosely refer to the collection of (matrix) parameters $\mathcal{H}, P_{0},P_{1}, W$ and the associated operator $\calA$ as \emph{port-Hamiltonian system}, see e.g.\ \cite{ZwartMaschke2010,JacobZwart2012,JacobZwart2018} for the origin of the terminology within control theory. These systems are sometimes referred to as \emph{1-D hyberbolic balance laws}, e.g.\ \cite{BastinCoron2016}.

The space $H$ and its norm naturally arise in modeling systems through energy balances and flows, with port-Hamiltonian operators $\calA$ being prototypical for (linear) hyperbolic partial differential equations, such as appearing in transport or vibration phenomena or fluid dynamics.

Here and in the sequel, $\one_d$ denotes the identity matrix of dimension $d$. For $t\in\R$, the solution $v=\Phi_t:[a,b]\to\R^{d\times d}$ associated to the matrix-valued ODE-system
    \begin{align*}
             v'(x)  & = -P_1^{-1}(\i t\calH(x)^{-1}+P_0) v(x) \quad \text{for }  x\in [a,b],  \\
             v(a)   & = \one_d                                     
    \end{align*}
is referred to as fundamental matrix (of the port-Hamiltonian system).
The connection between the fundamental matrix and the resolvent $\calR(\cdot,A)$ (along the imaginary axis)   is reflected in the following lemma, which already implicitly appeared in the proof of \cite[Theorem~3.4]{TrostorffWaurick2023}. We point out that~\eqref{eq:resolvent-formula} below is simply the variation of parameters formula associated to $\calA$.

\begin{lemma}
    \label{lem:resolvent-formula}
    Let $t\in \bbR$ and $u,f\in H$. Then $u=\Res(\i t,-\calA)f$ if and only if
    \begin{equation}
        \label{eq:resolvent-formula}
        (\calH u)(x) = \Phi_t(x)  (\calH u)(a) + \Phi_t(x) \int_a^x \Phi_t(s)^{-1}P_1^{-1}f(s)\dx s
    \end{equation}
    and
    \begin{equation}
        \label{eq:resolvent-condition}
        W
        \begin{bmatrix}
            \Phi_t(b)\\
            \one_d
        \end{bmatrix}
        (\calH u)(a) 
        = - W
        \begin{bmatrix}
            \Phi_t(b) \int_a^b \Phi_t(s)^{-1}P_1^{-1}f(s)\dx s\\
            0
        \end{bmatrix}.
    \end{equation}
\end{lemma}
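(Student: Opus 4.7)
The plan is to reduce the resolvent identity $u = \Res(\i t,-\calA)f$ to a first-order linear ODE system in the variable $v \coloneqq \calH u$, solve that system by variation of parameters using the fundamental matrix $\Phi_t$, and translate the boundary condition defining $\dom{\calA}$ into~\eqref{eq:resolvent-condition}.

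Concretely, I first observe that $u = \Res(\i t,-\calA)f$ is equivalent to $u \in \dom{\calA}$ together with $\i t u + \calA u = f$. Substituting the definition $\calA u = P_1(\calH u)' + P_0 \calH u$ and multiplying by $P_1^{-1}$ yields the inhomogeneous ODE
\[
  (\calH u)'(x) = -P_1^{-1}\bigl(\i t\,\calH(x)^{-1} + P_0\bigr)(\calH u)(x) + P_1^{-1}f(x),
\]
whose homogeneous part is exactly the system defining $\Phi_t$ in Section~\ref{sec:port-hamiltonian}. Standard variation of parameters with the ``initial'' value $(\calH u)(a)$ then produces precisely the representation~\eqref{eq:resolvent-formula}. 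Conversely, any $u$ for which $\calH u$ satisfies~\eqref{eq:resolvent-formula} automatically lies in $H^1([a,b];\bbR^d)$ after multiplication by $\calH$ and solves $\i t u + \calA u = f$.

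It remains to handle the boundary condition. Evaluating~\eqref{eq:resolvent-formula} at $x = b$ and inserting it into $W\begin{bmatrix}(\calH u)(b)\\ (\calH u)(a)\end{bmatrix}=0$, the term $(\calH u)(a)$ appears linearly, producing the matrix $W\begin{bmatrix}\Phi_t(b)\\ \one_d\end{bmatrix}$ acting on it, while the inhomogeneity contributes the remaining term on the right-hand side of~\eqref{eq:resolvent-condition}. Hence the boundary condition is equivalent to~\eqref{eq:resolvent-condition}, and conversely~\eqref{eq:resolvent-condition} combined with~\eqref{eq:resolvent-formula} at $x=b$ re-encodes the boundary condition, placing $u$ in $\dom{\calA}$.

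I do not foresee a genuine obstacle: the argument is variation of parameters followed by bookkeeping of the boundary data. The only point requiring care is the algebraic separation of the unknown $(\calH u)(a)$ from the inhomogeneous contribution when inserting~\eqref{eq:resolvent-formula} into the boundary condition, and the verification that the integral representation automatically yields $\calH u \in H^1([a,b];\bbR^d)$, which follows because $f \in L^2$ and $\Phi_t$ together with $\Phi_t^{-1}$ are continuous on $[a,b]$.
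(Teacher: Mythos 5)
Your proposal is correct and follows essentially the same route as the paper: rewrite $u=\Res(\i t,-\calA)f$ as $(\i t+\calA)u=f$ with $\calH u$ satisfying the boundary condition, convert this to the first-order ODE for $\calH u$ solved by variation of parameters with the fundamental matrix $\Phi_t$ (giving~\eqref{eq:resolvent-formula}), and then evaluate at $x=b$ to turn the boundary condition into~\eqref{eq:resolvent-condition}. No substantive difference or gap.
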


\begin{proof}
    On the space $L^2([a,b];\bbR^d)$, consider the operator
    \begin{align*}
        \dom{A} &\coloneqq \left\{ v \in H^1([a,b],\bbR^d):  W \begin{bmatrix} v(b) \\ v(a) \end{bmatrix} = 0 \right\}\\
        A v    &\coloneqq P_1 v' +P_0 v.
    \end{align*}
    Observe that $(\i t+\calA)u =f$ if and only if $\calH u \in \dom A$ and 
    \[
        (\calH u)'= - P_1^{-1}(\i t\calH^{-1} +P_0)\calH u + P_1^{-1}f.
    \]
    By definition of the fundamental matrix, the latter is equivalent to~\eqref{eq:resolvent-formula}. Whence,  
    \begin{align*}
         \begin{bmatrix} (\calH u)(b) \\ (\calH u)(a) \end{bmatrix}  
           =
        \begin{bmatrix}
            \Phi_t(b)\\
            \one_d
        \end{bmatrix}(\calH u)(a)  
        +
        \begin{bmatrix}
            \Phi_t(b) \int_a^b \Phi_t(s)^{-1}P_1^{-1}f(s)\dx s\\
            0
        \end{bmatrix}.
    \end{align*}
    In other words, $\calH u \in \dom A$ is equivalent to~\eqref{eq:resolvent-condition}.
\end{proof}

We recall the main result from \cite{WaurickZwart2023} characterising strong stability of $\calT$.
 As usual $\resSet(A)$ denotes the resolvent set of $A$.
\begin{theorem}[{{\cite[Theorem~1.3]{WaurickZwart2023}}}]\label{thm:WZmain}
Given the setting in the present section, the following conditions are equivalent.
\begin{enumerate}
    \item  The semigroup $\calT$ is strongly stable, i.e., for each $x\in X$, $\calT(t)x\to 0$ as $t\to\infty$.
    \item There are no spectral values of $\calA$ on the imaginary axis,  i.e., $\i\bbR \subseteq\resSet(\calA)$.
    \item  The matrix $
    T_t\coloneqq W
    \begin{bmatrix}
        \Phi_t(b)\\
        \one_d
    \end{bmatrix}
    $
    is invertible for each $t\in \bbR$.
\end{enumerate}
\end{theorem}

Recall that a strongly continuous semigroup $\calT$ with generator $\calA$ is called \emph{semi-uniformly stable}, if \[\lim_{t\to\infty}\norm{\calT(t)\calA^{-1}}=0.\] Our aim in this article is to give precise estimates on the function
\[
    M(\eta)\coloneqq \sup_{t\in [-\eta,\eta]} \norm{\Res(\i t,-\calA)}
\]
in order to obtain sharp decay rates on $\norm{\calT(t)\calA^{-1}}$ as $t\to\infty$ by celebrated results on semi-uniform stability, see Appendix~\ref{appendix:stability-background}.
Essentially, we show that the fundamental matrix $\Phi_{t}$ and $W$
determine the leading order of the resolvent norm on the imaginary axis and hence control this function.

For the rest of the paper, all matrix norms refer to the ones induced by the Euclidean norm.

\begin{theorem}\label{thm:MforPhs}
     Let $-\calA$ be the generator of the contraction semigroup $\calT$ above and assume that $\i \R\subseteq \resSet(\calA)$ and $\sup_{t\in \bbR}\norm{\Phi_t}_{\infty} <\infty$. 
     For any $t\in \bbR$, 
        \[T_t\coloneqq W
        \begin{bmatrix}
            \Phi_t(b)\\
            \one_d
       \end{bmatrix}\in\R^{d\times d}\] is invertible, and there exist $c,C>0$ such that
   \[
          c m(\eta)\leq M(\eta)\leq C m(\eta) \quad \text{for all } \eta\ge 0;
    \]
    where
    \[
         m(\eta)\coloneqq \sup_{t\in [-\eta,\eta]} \norm{T_t^{-1}}.
     \]
\end{theorem}

The assumption $\sup_{t\in \bbR}\norm{\Phi_t} _{\infty} <\infty$ already appeared in \cite{TrostorffWaurick2023} in order to obtain a characterisation for exponential stability of port-Hamiltonian systems. In particular, it is satisfied if $\mathcal{H}$ is of bounded variation 
or if $\mathcal{H}(x)E_+\subseteq E_+$ for a.e.~$x\in (a,b)$ with $E_+$ being the spectral subspace of positive eigenvalues of $P_1$.
For these results and further discussion on the condition we refer the reader to \cite[Section~6]{TrostorffWaurick2023}.

We recall that a continuous function $\gamma:\bbR_+\to (0,\infty)$ is said to be of \emph{positive increase} if there are $\alpha, t_0>0$ and $c\in (0,1]$ such that
\begin{equation}\label{eq:posinc}
    \frac{ \gamma(\lambda t)}{\gamma(t)} \ge c \lambda^{\alpha} \quad \text{whenever }\lambda\ge 1 \text{ and }t\ge t_0.
\end{equation}
In particular, polynomials and \emph{regularly varying} functions of positive index have a positive increase; see \cite{BattyChillTomilov2016,RozendaalSeifertStahn2019}.

\begin{remark}\label{rem:positive_increase_sandwich}
In Theorem~\ref{thm:MforPhs}, if the function $m$ is of positive increase, then so is the function $M$ associated to the resolvent. Indeed,
let $\gamma$ be of positive increase as in~\eqref{eq:posinc} and let $\delta \colon (0,\infty)\to(0,\infty)$ satisfy
  \[
     d \gamma(t)\leq \delta(t) \leq D \gamma(t)\qquad(t\geq t_0),
  \]
  for some $d,D,t_0>0$. Then letting $\alpha>0$, $c\in (0,1]$ according to~\eqref{eq:posinc} yields for $t\geq t_0$ and $\lambda\geq 1$ that
  \[
    \frac{\delta(\lambda t)}{\delta(t)}\geq  \frac{d \gamma(\lambda t)}{D \gamma(t)}\ge \frac{d}{D} c \lambda^\alpha.
  \]
  In other words, $\delta$ is of positive increase as well.
\end{remark}

We split the proof of Theorem~\ref{thm:MforPhs} across the following two lemmata. Observe the topological isomorphism of Hilbert spaces
$S\colon u\mapsto \calH^{-1}u$ from $L^2([a,b];\bbR^d)$ to $H$; see e.g.\ \cite[Lemma~3.1]{TrostorffWaurick2023}.
\begin{lemma}
    \label{lem:sufficient}
    Assume that $\i\bbR \subseteq\resSet(\calA)$ and that $B\coloneqq\sup_{t\in \bbR}\norm{\Phi_t}_{\infty} <\infty$. Then for each $t\in \bbR$, the matrix
    $
    T_t\coloneqq W
    \begin{bmatrix}
        \Phi_t(b)\\
        \one_d
    \end{bmatrix}
    $
    is invertible and
    \begin{equation}
        \label{eq:resolvent-estimate-upper}
        \norm{ \Res(\i t, -\calA)} \leq \widetilde C( \norm{T_t^{-1}}+1);
    \end{equation}
    where
    \[
        \widetilde C\coloneqq(b-a)B^2\norm{S}\norm{P_1} \norm{P_1^{-1}}^2 \max\{B\norm{W},(b-a)^{1/2}\}.
    \]
\end{lemma}

\begin{proof}
     First of all, recall that $\norm{\Phi_t^{-1}}_{\infty}\leq  B\norm{P_1}  \norm{P_1^{-1}}$ from \cite[Lemma~3.2]{TrostorffWaurick2023} and from Theorem~\ref{thm:WZmain} that $\i\bbR\subseteq \resSet(\calA)$ implies the invertibility of $T_t$ for all $t\in \R$.
     
    Let $f\in H$ and $t\in \bbR$.
    Setting $v\coloneqq S^{-1}\Res(\i t,-\calA)f$, we obtain from~\eqref{eq:resolvent-formula} that $\norm{\Res(\i t,-\calA)f}$ can be estimated from above by
    \begin{align*}
         \norm{S} \norm{v}_{L^2} 
                                  \le (b-a)^{1/2} \norm{\Phi_t}_{\infty}\norm{S} \left( \norm{v(a)} + (b-a) \norm{\Phi_t}_{\infty} \norm{P_1} \norm{P_1^{-1}}^2 \norm{f}_{L^2}\right).
    \end{align*}
    On the other hand,~\eqref{eq:resolvent-condition} and invertibility of $T_t$ together imply that 
    \begin{align}
        \label{eq:norm-at-a}
        \begin{split} 
            \norm{v(a)} & = \norm{T_t^{-1}W  
            \begin{bmatrix}
                \Phi_t(b) \int_a^b \Phi_t(s)^{-1}P_1^{-1}f(s)\dx s\\
                0
            \end{bmatrix} }\\
            &\le  (b-a)^{1/2} \norm{\Phi_t}_{\infty}^2\norm{W}\norm{P_1}  \norm{P_1^{-1}}^2 \norm{T_t^{-1}}  \norm{f}_{L^2}.
        \end{split}
    \end{align}
    It follows that
    \[
        \norm{ \Res(\i t, -\calA)} \le (b-a)B^2\norm{S}\norm{P_1} \norm{P_1^{-1}}^2 \left(  B\norm{W}  \norm{T_t^{-1}}   +  (b-a)^{1/2} \right)
    \]
    and hence the assertion.
\end{proof}

\begin{lemma}
    \label{prop:necessary}
   Let $t\in \bbR$. If 
   $
    T_t\coloneqq W
    \begin{bmatrix}
        \Phi_t(b)\\
        \one_d
    \end{bmatrix}
    $
    is invertible, then
    \[
        \norm{T_t^{-1}}\le C_t\left( (b-a)^{1/2} \|\calH\|_\infty\norm{P_1} \norm{\Res(\i t,-\calA)} +1\right) ;
    \]
    where 
    \[
        C_t\coloneqq \left( \frac{1}{(b-a)^{3/2}} \norm{P_1^{-1}}^2 \norm{P_1}^2 \norm{\Phi_t}_{\infty}^3 \left(1+\norm{\Phi_t}_{\infty}\right)+1\right) \norm{W^+};
    \]
    here $W^+$ denotes the Moore--Penrose inverse of $W$.
\end{lemma}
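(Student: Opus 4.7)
The plan is to bound $\norm{v}$ for an arbitrary $v\in\bbR^d$ by a constant multiple of $\norm{T_tv}$, with the dependence on $\norm{\Res(\i t,-\calA)}$ as claimed. The idea is to manufacture from $v$ an element $\tilde u\in\dom{\calA}$ to which the resolvent estimate applies, and to recover $v$ from its trace at $a$.

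First I would set $u_v\coloneqq\calH^{-1}\Phi_tv\in H$; the ODE defining $\Phi_t$ ensures $\i t u_v+P_1(\calH u_v)'+P_0\calH u_v=0$ pointwise, with $(\calH u_v)(a)=v$, $(\calH u_v)(b)=\Phi_t(b)v$, and boundary defect $W\begin{bmatrix}(\calH u_v)(b)\\ (\calH u_v)(a)\end{bmatrix}=T_tv$. Using the Moore--Penrose inverse, put $\begin{bmatrix}z_b\\ z_a\end{bmatrix}\coloneqq W^+T_tv$; then $\norm{z_a},\norm{z_b}\le\norm{W^+}\norm{T_tv}$ and $W\begin{bmatrix}z_b\\ z_a\end{bmatrix}=T_tv$. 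Let $\hat g$ be the linear interpolation on $[a,b]$ with $\hat g(a)=z_a$ and $\hat g(b)=\Phi_t(b)^{-1}z_b$, and define the correction $w\coloneqq\calH^{-1}\Phi_t\hat g\in H$. A direct computation exploiting the ODE once more yields the clean identity
\[
(\i t+\calA)w=P_1\Phi_t\hat g',
\]
in which $\i t$ and $P_0$ are absorbed by $\Phi_t$; this is the crucial cancellation that avoids any $|t|$-factor in the sequel.

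Setting $\tilde u\coloneqq u_v-w$, the boundary defects cancel and $\tilde u\in\dom{\calA}$, hence $\tilde u=-\Res(\i t,-\calA)P_1\Phi_t\hat g'$. Applying the representation formula~\eqref{eq:resolvent-formula} to $\tilde u$ with source $-P_1\Phi_t\hat g'$, the inner integral collapses to $-(\hat g(x)-\hat g(a))$ via $\Phi_t(s)^{-1}P_1^{-1}P_1\Phi_t(s)=\one_d$, and combined with $v=(\calH\tilde u)(a)+\hat g(a)$ this gives the pointwise identity
\[
v=\Phi_t(x)^{-1}(\calH\tilde u)(x)+\hat g(x),\qquad x\in[a,b].
\]
Averaging over $x\in[a,b]$ and invoking $\norm{\Phi_t^{-1}}_\infty\le B_t\norm{P_1}\norm{P_1^{-1}}$ from \cite[Lemma~3.2]{TrostorffWaurick2023}, together with $\norm{\calH\tilde u}_{L^2}\le\norm{\calH}_\infty^{1/2}\norm{\tilde u}_H$ and $\norm{\tilde u}_H\le\norm{\Res(\i t,-\calA)}\norm{\calH}_\infty^{1/2}\norm{P_1}B_t\norm{\hat g'}_{L^2}$, will deliver the resolvent-weighted part of the bound. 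The routine estimates $\norm{\hat g'}_{L^2}\le(b-a)^{-1/2}(B_t\norm{P_1}\norm{P_1^{-1}}+1)\norm{W^+}\norm{T_tv}$ and $\norm{\hat g}_\infty\le B_t\norm{P_1}\norm{P_1^{-1}}\norm{W^+}\norm{T_tv}$ (using $B_t,\norm{P_1}\norm{P_1^{-1}}\ge 1$) then complete the calculation; collecting constants in the form $\tfrac{B_t^3(1+B_t)\norm{P_1}^2\norm{P_1^{-1}}^2}{(b-a)^{3/2}}+1$ recovers the stated prefactor~$C_t$.

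The hard part is the choice of $w$: a naïve correction---say, a linear interpolation of $\calH w$ itself between $z_a$ and $z_b$---would leave the term $\i t w$ in $(\i t+\calA)w$, contaminating the final constants with an unwanted $|t|$-factor. Routing the correction through $\Phi_t$ cancels this term and simultaneously collapses the representation formula for $\tilde u$ into the clean pointwise identity used to recover $v$.
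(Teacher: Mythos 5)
Your construction is essentially the paper's own proof in different clothing: the paper feeds the source $f=(b-a)^{-1}P_1\Phi_t(\argument)\Phi_t(b)^{-1}\bigl(-z_1+\Phi_t(b)z_2\bigr)$ with $\begin{bmatrix} z_1 & z_2\end{bmatrix}^\top=W^+z$ directly into~\eqref{eq:resolvent-condition} and recovers $T_t^{-1}z=(\calH u)(a)+z_2$, which coincides with your $u_v-w$ decomposition for linear $\hat g$ (your $-P_1\Phi_t\hat g'$ is exactly this $f$), including the same collapse of the Duhamel integral, the same averaging in $x$, and the same use of $\norm{\Phi_t^{-1}}_\infty\le B_t\norm{P_1}\norm{P_1^{-1}}$. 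The only deviation is constant bookkeeping: bounding the correction through $\norm{\hat g}_\infty$ leaves a resolvent-free term $B_t\norm{P_1}\norm{P_1^{-1}}\norm{W^+}\norm{T_tv}$, which is not literally dominated by the stated $C_t$ when $b-a$ is large (the paper instead splits off $z_2=\hat g(a)$, bounded by $\norm{W^+}$ alone, before averaging), but this changes only the explicit constant, not the structure of the estimate or any of its uses in the paper.
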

\begin{proof}
    By compactness, we find $z\in \bbR^d$ with $\norm{z}=1$ such that 
    \[
        \norm{T_t^{-1}z} = \norm{T_t^{-1}}.
    \]
    Since $W$ has full rank, the element  
    $\begin{bmatrix}
         z_1 &  z_2
    \end{bmatrix}^\top\coloneqq W^{+}z$ satisfies
    \[
        z = W 
        \begin{bmatrix}
             z_1 \\  z_2
        \end{bmatrix}.
    \]
    Next, we set
    \[
        y\coloneqq -z_1+ \Phi_t(b)z_2 \quad \text{ and }\quad f\coloneqq (b-a)^{-1} P_1 \Phi_t(\argument)\Phi_t(b)^{-1}y.
    \]
    Of course,
    $
        \norm{y} \le \left(1+ \norm{\Phi_t}_{\infty}\right)\norm{W^+ z} 
    $
    and furthermore, by \cite[Lemma~3.2]{TrostorffWaurick2023} we have
    $f\in L^\infty([a,b],\bbR^d)$ with 
    \begin{align}
        \begin{split}
            \label{eq:sup-norm-f}
           \norm{f}_{\infty} &\le (b-a)^{-1} \norm{P_1} \norm{\Phi_t}_{\infty} \big(\norm{P_1^{-1}} \norm{P_1} \norm{\Phi_t(b)}\big) \norm{y}\\
                            & \le (b-a)^{-1} \norm{\Phi_t}_{\infty}^2(1+\norm{\Phi_t}_{\infty}) \norm{P_1}^2 \norm{P_1^{-1}}\norm{W^{+}z}.
        \end{split}
    \end{align}
    Next,~\eqref{eq:resolvent-condition} along with our choice of $f$ yields that $u\coloneqq \Res(\i t,-\calA)f$ satisfies
    \begin{align*}
        T_t(\calH u)(a)&= -W 
                    \begin{bmatrix}
                        y \\
                        0  
                    \end{bmatrix}
                    = W 
                    \begin{bmatrix}
                        z_1- \Phi_t(b)z_2 \\ 
                        0
                    \end{bmatrix}\\
                    & = W
                    \begin{bmatrix}
                        z_1 \\
                        z_2
                    \end{bmatrix}
                    - W
                    \begin{bmatrix}
                        \Phi_t(b)\\
                        \one_d
                    \end{bmatrix}z_2\\
                    & = z-T_t z_2.
    \end{align*}
    Invertibility of $T_t$ thus gives $T_t^{-1}z=(\calH u)(a)+z_2$ and whence
    \begin{equation}
        \label{eq:inverse-estimate-preliminary}
       \norm{T_t^{-1}}=\norm{T_t^{-1}z} \le \norm{(\calH u)(a)}+\norm{z_2} \le \norm{(\calH u)(a)}+\norm{W^+z}.
    \end{equation}

    It remains to estimate $\norm{(\calH u)(a)}$ for which we recall from Lemma~\ref{lem:resolvent-formula} that
    \[
        (\calH u) (a) = \Phi_t(x)^{-1}(\calH u)(x)- \int_a^x \Phi_t(s)P_1^{-1}f(s)\dx s.
    \]
    Therefore, we can once again use \cite[Lemma~3.2]{TrostorffWaurick2023} to deduce that
    \begin{align*}
        \norm{\Phi_t}_{\infty}^{-1}\norm{P_1^{-1}}^{-1}\norm{(\calH u)(a)  }_2 
                              & =      \norm{P_1}  \norm{\calH u}_{L^2} +  \norm{f}_{\infty}\\
                              & =  \norm{P_1}  \norm{\calH \Res(\i t,-\calA)f}_{L^2} +  \norm{f}_{\infty}\\
                              & \le  \norm{\calH}_\infty \norm{P_1}  \norm{ \Res(\i t,-\calA)}\norm{f}_{L^2} +  \norm{f}_{\infty}\\
                              & \le \left((b-a)^{1/2}\|\calH\|_\infty \norm{P_1}  \norm{ \Res(\i t,-\calA)} +  1\right)\norm{f}_{\infty}.
    \end{align*}
    In particular, we infer from~\eqref{eq:sup-norm-f} that
    \begin{align*}
        \norm{(\calH u)(a)  } &\le (b-a)^{-1/2} \norm{(\calH u)(a)  }_2 \\
         &\le \left(C_t \norm{W^+}^{-1}-1\right)
          \left((b-a)^{1/2}\|\calH\|_\infty \norm{P_1}  \norm{ \Res(\i t,-\calA)} +  1\right)\norm{W^{+}z}.
    \end{align*}
    Substituting into~\eqref{eq:inverse-estimate-preliminary} and using $\norm{z}=1$, we deduce the assertion.
\end{proof}

\begin{proof}[Proof of Theorem~\ref{thm:MforPhs}]
    From Lemma~\ref{lem:sufficient} -- in particular, the estimate~\eqref{eq:resolvent-estimate-upper} -- we obtain the upper bound for the function $M$ associated to the resolvent. 
    On the other hand, Lemma~\ref{prop:necessary} yields that
    \[  
        \norm{T_t^{-1}} \le C  \left(  \norm{\Res(\i t,-\calA)} +1\right);
    \]
    where the constant $C$ is given by
    \[
        \left( \frac{1}{(b-a)^{3/2}}  \norm{P_1}^2 \norm{P_1^{-1}}^2 B^3 (1+B)+1\right)\norm{W^+} \max\left\{(b-a)^{1/2} \|\calH\|_\infty \norm{P_1}, 1\right\}.
    \]
\end{proof}
  
\section{A universal example for stability types}
    \label{sec:exaeasy}

    The main example we focus on in the following is a generalisation of  \cite[Example~4.3]{TrostorffWaurick2023}, where, using the notation below,  $\alpha=\sqrt{2}$ was taken. For the general case considered here, let $\alpha \in (0,\infty)$ and note that indeed the particular choice of $\alpha$ will crucially influence the decay rates. On $L^2([0,1];\bbR^2)$ consider the port-Hamiltonian system given by
\begin{equation}
    \label{eq:universal-example}
    \calH\coloneqq 
    \begin{bmatrix}
        1 & 0\\
        0 & \alpha
    \end{bmatrix}^{-1},
    \ 
    P_1 \coloneqq \one_2=
    \begin{bmatrix}
        1 & 0\\
        0 & 1
    \end{bmatrix}, \  P_0\coloneqq 0,
    \ 
    \text{ and }
    \ 
    W\coloneqq  
    \begin{bmatrix}
        \widetilde W & \one_2
    \end{bmatrix}
\end{equation}
in~\eqref{eq:original-gernerator};
where 
$
    \widetilde W\coloneqq\frac12
    \begin{bmatrix}
        1 & 1 \\
        1 & 1
    \end{bmatrix}.
$
By \cite[Theorem~2.4(iv) and Example~4.3]{TrostorffWaurick2023}, the operator $-\calA$ given by~\eqref{eq:original-gernerator} generates a contraction semigroup $\calT^\alpha$ with
\begin{equation}\label{eq:Talphat}
    T_{t,\alpha}= \widetilde W\e^{\i t\calH^{-1}}+\one_2
    \quad
    \text{and}
    \quad
    \det(T_{t,\alpha})= 1+\frac12\left(\e^{\i t}+\e^{\i\alpha t}\right);
\end{equation}
here $T_{t,\alpha}$ denotes the matrix $T_t$ in Theorem~\ref{thm:MforPhs} corresponding to $\calT^\alpha$.

Note that in the present case, $\calH$ is constant, and, thus, of bounded variation, so that the condition $\sup_{t\in \bbR}\norm{\Phi_t}_{\infty} <\infty$ on the fundamental matrices is satisfied.
It was also shown in \cite[Example~4.3]{TrostorffWaurick2023} that $\calT^\alpha$ is not exponentially stable for $\alpha=\sqrt{2}$. Actually, even more is true:

\begin{proposition}\label{prop:strongstab} 
    The semigroup $\calT^\alpha$ is not exponentially stable for any $\alpha \in (0,\infty)$. Moreover,
    the following conditions are equivalent:
    \begin{enumerate}
      \item The semigroup $\calT^\alpha$ is strongly stable.
      \item The number $\alpha$ is irrational.
    \end{enumerate} 
\end{proposition}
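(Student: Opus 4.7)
The plan is to invoke Theorem~\ref{thm:WZmain}: the strong stability of $\calT^\alpha$ is equivalent to the invertibility of $T_{t,\alpha}$ for every $t\in\bbR$. Combined with the explicit formula~\eqref{eq:Talphat}, the problem reduces to determining when the scalar $1+\tfrac{1}{2}(\e^{\i t}+\e^{\i\alpha t})$ is nonzero for all $t\in\bbR$.

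The key geometric observation is that, since $\e^{\i t}$ and $\e^{\i\alpha t}$ both lie on the unit circle and the closed unit disk is strictly convex, the equation $\e^{\i t}+\e^{\i\alpha t}=-2$ forces $\e^{\i t}=\e^{\i\alpha t}=-1$. Equivalently, $\det(T_{t,\alpha})$ vanishes for some $t\in\bbR$ if and only if there exist integers $k,l$ with $t=(2k+1)\pi$ and $\alpha t=(2l+1)\pi$, i.e.\ $\alpha(2k+1)=2l+1$.

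The implication (ii)$\Rightarrow$(i) is then immediate: when $\alpha\notin\bbQ$, no such integers $k,l$ can exist, hence $T_{t,\alpha}$ is invertible for every $t\in\bbR$, and Theorem~\ref{thm:WZmain} yields strong stability of $\calT^\alpha$. For the reverse implication, I would argue contrapositively: given $\alpha\in\bbQ$, the task is to exhibit explicit integers $k,l$ solving $\alpha(2k+1)=2l+1$, thereby producing a spectral value of $\calA$ on $\i\bbR$ and obstructing strong stability. The direct choice is $t=q\pi$ when $\alpha=p/q$ is written appropriately, and the step requiring the most care is the parity discussion showing that such a representation is always available; I expect this arithmetic step to be the main technical point of the argument.
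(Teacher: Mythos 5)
Your reduction is exactly the one the paper intends: Proposition~\ref{prop:strongstab} is presented as a direct application of Theorem~\ref{thm:WZmain} together with the formula~\eqref{eq:Talphat}, and your strict-convexity observation that $\det(T_{t,\alpha})=0$ forces $\e^{\i t}=\e^{\i\alpha t}=-1$, i.e.\ $t=(2k+1)\pi$ and $\alpha t=(2l+1)\pi$, is precisely the relevant computation. In particular your proof of (ii)$\Rightarrow$(i) is complete and matches the (one-line) argument in the paper.

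The gap is in (i)$\Rightarrow$(ii), and it is not a parity technicality you can repair: the equation $\alpha(2k+1)=2l+1$ is solvable exactly when $\alpha$ is a quotient of two \emph{odd} integers, not for every rational $\alpha$. If $\alpha=p/q$ in lowest terms with $p$ or $q$ even -- take $\alpha=2$ -- then $\e^{\i t}=-1$ forces $\e^{\i\alpha t}=1$, so $\det(T_{t,\alpha})$ never vanishes; Theorem~\ref{thm:WZmain} then yields that $\calT^\alpha$ \emph{is} strongly stable although $\alpha$ is rational (indeed $t\mapsto\det(T_{t,\alpha})$ is periodic and bounded away from $0$, so $\sup_{t\in\bbR}\norm{T_{t,\alpha}^{-1}}<\infty$ and the criterion quoted in the proof of Proposition~\ref{prop:noexpo} even gives exponential stability). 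So the dichotomy that actually follows from the determinant formula is ``$\calT^\alpha$ is strongly stable if and only if $\alpha$ is not a ratio of two odd integers''; irrationality is sufficient but not necessary, which is consistent with the fact that all the quantitative results later in the paper (from Section~\ref{sec:ooappr} on) are phrased in terms of odd/odd approximations of $\alpha$. Your proposed route for (i)$\Rightarrow$(ii) therefore breaks down exactly at the step you deferred, and no choice of representation $\alpha=p/q$ can rescue it for rationals with an even numerator or denominator; the proposition as stated glosses over this point, and an honest proof must either restrict direction (i)$\Rightarrow$(ii) to the odd/odd case or reformulate condition (ii) accordingly.
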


\begin{proof}
     The equivalence of strong stability and irrationality of $\alpha$ follows by a direct application of Theorem~\ref{thm:WZmain} keeping in mind the formula of $\det(T_{t,\alpha})$ given in~\eqref{eq:Talphat}.
     
     On the other hand, we know from \cite[Theorem~3.5]{TrostorffWaurick2023}, that $\calT^\alpha$ is exponentially stable if and only if the function $\eta\mapsto m_{\alpha}(\eta) = \sup_{t\in [-\eta,\eta]} \norm{T_{t,\alpha}^{-1}}$ is uniformly bounded. Now, for irrational $\alpha$, we have from Proposition~\ref{prop:square} below that $m_{\alpha}$ is unbounded, whence the semigroup can't be exponentially stable.
\end{proof}

The following two theorems -- in which we examine the system associated with~\eqref{eq:original-gernerator} for non-exponential stability -- are the main results of this article. 
Recall that a number $\alpha\in (0,\infty)$ is called \emph{badly approximable} if there exists $c>0$ such that for all $p,q\in \Z$ with $q\neq 0$, we have
\[
	\left| \alpha - \frac{p}{q} \right|
	\geq \frac{c}{q^2}.
\]

\begin{theorem}\label{thm:decayest} 
    For the port-Hamiltonian system~\eqref{eq:universal-example}, the following bounds hold.
    \begin{enumerate}[\upshape (a)]
        \item For Lebesgue a.e.~$\alpha \in (0,\infty)$ and all $\varepsilon>0$, there exist $C,t_0>0$ with
        \[
            \norm{\calT^\alpha(t)\calA^{-1}}\leq%
            \frac{C(\log t)^{1+\epsilon}}{t^{1/2}}%
            \quad \text{for all }t\ge t_0.
        \]
      \item If $\alpha\in (0,\infty)$ is badly approximable, then there exist $c,C,t_0>0$ such that
        \[
           \frac{c}{\sqrt{t}}\leq  \norm{\calT^\alpha(t)\calA^{-1}}\leq \frac{C}{\sqrt{t}}\quad \text{for all }t\ge t_0.
        \]
        \item If $\gamma \colon (0,\infty)\to (0,\infty)$ is increasing with 
        $\lim_{t\to \infty} t^{-2}\gamma(t)=\infty$, then there exist $\alpha\in (0,\infty)$ and $c,C, t_0>0$ such that
        \[
            \norm{\calT^\alpha(t)\calA^{-1}}\leq \frac{c}{\gamma_{\log}^{-1}(t/C)}\quad \text{for all }t\ge t_0;
        \]
        where
        \[
            \gamma_{\log}(\eta) \coloneqq \gamma(\eta)\big(\log(1+\gamma(\eta))+\log(1+\eta)\big)\qquad (\eta> 0).
        \]
        In addition, if $\gamma$ is continuous and of positive increase as in~\eqref{eq:posinc}, then the latter estimate can be replaced by
        \[
           \norm{\calT^\alpha(t)\calA^{-1}}\leq \frac{c}{\gamma^{-1}(t)}
           \quad \text{for all }t\ge t_0
        \]
        for some $c>0$.
    \end{enumerate}
\end{theorem}

The stability rates in Theorem~\ref{thm:decayest}(b) are optimal, while,  in Theorem~\ref{thm:decayest}(a) up to a logarithmic factor, see the first statement in the following theorem. The second statement shows in which sense the rates in Theorem~\ref{thm:decayest}(c) are optimal.

\begin{theorem}\label{thm:decayopt}
    For the port-Hamiltonian system~\eqref{eq:universal-example}, the following hold.
    \begin{enumerate}[\upshape (a)]
    \item For all irrational $\alpha \in (0,\infty)$, 
    there exist $c>0$  such that for each $t_0>0$, there exists $t\ge t_0$ with  $\sqrt{t}\norm{\calT^\alpha(t)\calA^{-1}}\ge c$;
    equivalently,
    \[
        \limsup_{t\to\infty}\sqrt{t}\norm{\calT^\alpha(t)\calA^{-1}}>0.
    \]
    
    \item If $\gamma \colon (0,\infty)\to (0,\infty)$ is %
    continuous, increasing, and unbounded,
     then there exist $\alpha\in (0,\infty)$ and $c>0$ such that
    \[
       \limsup_{t\to\infty}\gamma^{-1}(ct) \norm{\calT^\alpha(t)\calA^{-1}}>0.
    \] 
    Furthermore, if $\gamma$ is of positive increase, any $c>0$ may be chosen.
    \end{enumerate}
\end{theorem}

We denote the function $m$ in Theorem~\ref{thm:MforPhs} corresponding to $\calT^\alpha$ by $m_\alpha$ and the function $M$ associated to the resolvent by $M_{\alpha}$. The proofs of Theorems~\ref{thm:decayest} and~\ref{thm:decayopt} -- given at the end of this section -- rely on the properties of  $m_{\alpha}$, that are collected in the following propositions.

\begin{proposition}\label{prop:square} 
    Let $\varepsilon>0$. Then for Lebesgue a.e.~$\alpha \in (0,\infty)$ there exist $C,\eta_0>0$ such that for all $\eta\in [\eta_0,\infty)$, the function $m_{\alpha}(\eta)\coloneqq \sup_{t\in [-\eta,\eta]} \norm{T_{t,\alpha}^{-1}}$ satisfies
    \[
        m_\alpha (\eta) \leq C \eta^2 (\log \eta)^{2+\varepsilon}.
    \]
    Moreover, for all irrational $\alpha\in (0,\infty)$ there is $c>0$ such that for all $t_0>0$ there exists $t\geq t_0$ with
    \[
       c t^2 \leq \norm{T_{t,\alpha}^{-1}}.
    \]
\end{proposition}
Even though the set of badly approximable numbers has Lebesgue measure zero \cite[Theorem 29]{Kh63}, its Hausdorff dimension is $1$ \cite{Jarnik29}.
In particular, $m_{\alpha}$ has positive increase precisely on a set that is “large” in terms of dimension but “small” in terms of measure, as formalised in the following result.

\begin{proposition}\label{prop:badsquare} 
    For $\alpha \in (0,\infty)$, let $m_{\alpha}(\eta)\coloneqq \sup_{t\in [-\eta,\eta]} \norm{T_{t,\alpha}^{-1}}$.
    \begin{enumerate}[\upshape (a)]
        \item If $\alpha\in (0,\infty)$ is badly approximable, then there exists $c,C,\eta_0>0$ such that for all $\eta\geq \eta_0$, we have
        \[
           c\eta^2 \leq  m_\alpha (\eta) \leq C\eta^2;
        \]
        and hence $m_\alpha$ is of positive increase (see Remark~\ref{rem:positive_increase_sandwich}).

        \item The set
            \[
               \{ \alpha \in (0,\infty): m_\alpha \text{ is of positive increase}\}
            \]
            is a Lebesgue null set. 
    \end{enumerate}
\end{proposition}

Next, we turn to more particular constructions allowing for more specific asymptotic behaviour of the resolvent/semigroup.

\begin{proposition}\label{prop:othertypes}
    If $\gamma \colon (0,\infty)\to (0,\infty)$ is increasing, then 
    there exist $\alpha\in (0,\infty)$ and $c>0$ such that
    for all $t_0>\pi$, there exists $t\geq t_0$ with
    \[
       c\gamma(t - \pi ) \leq \norm{T_{t,\alpha}^{-1}}.
    \]
    In addition, if
    \[
        \lim_{t\to \infty} t^{-2}\gamma(t)=\infty,
    \]
    then there exist $C,\eta_0>0$ such that 
    \[
        m_\alpha (\eta) \leq C\gamma(\eta + \pi)\qquad(\eta\geq \eta_0),
    \]
    where $m_{\alpha}(\eta)\coloneqq \sup_{t\in [-\eta,\eta]} \norm{T_{t,\alpha}^{-1}}$.
\end{proposition}

\begin{remark}
    In Proposition~\ref{prop:othertypes}, even if $\gamma$ is of positive increase, $m_\alpha$ might not be. Indeed, in Example~\ref{ex:alpha_wellapprox} below, $\gamma(t)=\frac1{f(t)} =e^t$ is of positive increase. Yet $m_{\alpha}$  for the constructed $\alpha$ is not of positive increase due to Lemma~\ref{lem:not_pos_incr}; see Example~\ref{ex:odd-odd-gaps}.
\end{remark}

The proofs of Propositions~\ref{prop:square},~\ref{prop:othertypes}, and~\ref{prop:badsquare} are based on approximations of irrational numbers by rationals and are postponed to Sections~\ref{sec:proof-square},~\ref{sec:proof-othertypes}, and~\ref{sec:posinc} respectively.

\begin{proof}[Proof of Theorem~\ref{thm:decayest}]
    (a) Note that $t\mapsto \delta(t)\coloneqq t^2 (\log t)^{2+\varepsilon}$ is of positive increase. Therefore, combining Proposition~\ref{prop:square} and Theorem~\ref{thm:MforPhs} yields that the assumptions of Theorem~\ref{thm:RSS32} are fulfilled for $\gamma = \delta$. Consequently,
    we obtain for Lebesgue a.e.~$\alpha \in (0,\infty)$ and all $\varepsilon>0$, there exist $c,C,t_0>0$ with
    \[
            \norm{\calT^\alpha(t)\calA^{-1}}\leq \frac{C}{\delta^{-1}(t)}\quad \text{for all }t\ge t_0.
    \]
    
    Replacing $\epsilon$ by $2\epsilon$, it remains to show that $\delta^{-1}(t) \sim s(t)\coloneqq 2^{1+\epsilon/2}t^{1/2}(\log t)^{-1-\epsilon/2}$.
    Firstly, from $\frac{\log(\log t)}{\log t} \to 0$ as $t\to\infty$ and
    \[
        \log( s(t)) = \log(2^{1+\epsilon/2}) +\frac12\log t-\left(1+\frac{\epsilon}2\right)\log(\log t)
    \]
    we conclude that $\log( s(t)) \sim \frac12 \log t$. Thus
    \begin{align*}
        \delta(s(t))  = s(t)^2 \log( s(t) )^{2+\epsilon}
                      \sim 2^{2+\epsilon}t(\log t)^{-2-\epsilon} \left(\frac12 \log t\right)^{2+\epsilon}
                      = t,
    \end{align*}    
    i.e., $\delta^{-1}(t) \sim s(t)$.

    (b) If $\alpha$ is badly approximable, then $m_{\alpha}$ is of positive increase by Proposition~\ref{prop:badsquare}(a). In turn, Remark~\ref{rem:positive_increase_sandwich} and Theorem~\ref{thm:MforPhs} imply that $M_{\alpha}$ associated to the resolvent is of positive increase as well.
    Therefore, we can apply Theorem~\ref{thm:RSS11} to obtain the desired estimate with $M_{\alpha}^{-1}(t)$ instead of $\sqrt t$.
    Employing Proposition~\ref{prop:badsquare}(a) and Theorem~\ref{thm:MforPhs} once again yields the result; cf. Remark~\ref{rem:inverse-log-sandwich}.

    (c) 
    Firstly, it follows from Theorem~\ref{thm:MforPhs} and Proposition~\ref{prop:othertypes} that there exists $\alpha\in (0,\infty)$, $C'>0$ and $\eta_{0}>0$ such that $M_{\alpha, \log}(\eta)\le C'\gamma_{\log}(\eta+\pi)$ for all $\eta\ge\eta_{0}$. From this, it is easy to show that there exists $C>0$ and $s_{0}>0$ such that
    \begin{equation}
        \label{eq:decayest:log}
        \gamma_{\log}^{-1}\left(\frac{s}{C}\right) -\pi \le M_{\alpha,\log}^{-1}(s) \quad (s\ge s_{0}).
    \end{equation}
    Since $\gamma_{\log}^{-1}$ is increasing with $\lim_{s\to\infty}\gamma_{\log}^{-1}(s)=\infty$, the left-hand side is bounded below by $\frac{1}{2}\gamma_{\log}^{-1}(s/C)$ for all $s\ge s_{1}$ and some $s_{1}\ge s_{0}$.
    On the other hand, due to Theorem~\ref{thm:borichev-tomilov}, there exists $c, t_0>0$ such that
    \begin{equation}
        \label{eq:decayest:semigroup}
        \norm{\calT^\alpha(t)\calA^{-1}} \le \frac{c}{M_{\alpha,\log}^{-1}(t/c)} \quad \text{for all }t\ge t_0.
    \end{equation}
    
    Combining~\eqref{eq:decayest:log} and~\eqref{eq:decayest:semigroup} yields asserted estimate.%
    Furthermore, if $\gamma$ is continuous and of positive increase, then the second inequality is a consequence of Theorems~\ref{thm:RSS32} and~\ref{thm:MforPhs} and Proposition~\ref{prop:othertypes}.
\end{proof}

\begin{proof}[Proof of Theorem~\ref{thm:decayopt}]
    Let $\alpha \in (0,\infty)$ be irrational. 

    (a) We know from Proposition~\ref{prop:square} and Theorem~\ref{thm:MforPhs} that there exists $d>0$ such that for each $\eta_0>0$, there exists $\eta\ge \eta_0$ with $M_{\alpha}(\eta)\ge d \eta^2$. As $M_{\alpha}$ is unbounded, we also have from Theorem~\ref{thm:lowerbound} that there exist $c, C, t_0>0$ such that
    \[
        \norm{\calT^\alpha(t)\calA^{-1}} \ge \frac{c}{M_{\alpha}^{-1}(Ct)}
    \]
    for all $t\ge t_0$. The result now follows because $\liminf_{t\to\infty}M_{\alpha}^{-1}(t)\le \sqrt{d^{-1}t}$.
    
    (b) The assertion follows by combining Theorem~\ref{thm:MforPhs} with Propositions~\ref{prop:othertypes} and~\ref{prop:CPSST23-53}.
\end{proof}

\section{A means to quantify odd over odd rational approximations}
    \label{sec:ooappr}

For the proofs of our main results related to the example in Section~\ref{sec:exaeasy},
rational approximations of irrational numbers -- so that both numerator and denominator are odd integers -- are the decisive objects to understand. 
Put differently, 
\begin{equation}\label{eq:defh}
   h(t) \coloneqq \modulus{2+\e^{\i \pi t}+\e^{\i \pi \alpha t}} \quad(t\in \bbR) 
\end{equation}
for irrational $\alpha$  can serve as a means to quantify the error for odd/odd rational approximations of $\alpha$ in the following sense.

\begin{theorem}\label{thm:upplowh}
    If $\alpha\in \bbR$ is irrational, then there exist $c,C>0$ such that
    \begin{equation}
        \label{eq:upplowh}
        c   \left(\min_{u \textnormal{ odd}}\modulus{v \alpha - u}\right)^2
        \leq \inf_{t \in [v-1, v + 1]} h(t)
        \leq C   \left(\min_{u \textnormal{ odd}} \modulus{v \alpha - u}\right)^2
    \end{equation}
    for all odd integers $v$.
\end{theorem}

The proof is done in two steps -- first for the upper and then for the lower bound.

\begin{proof}[Proof of the upper estimate in Theorem~\ref{thm:upplowh}]
    Fix an odd $v \in \Z$ and choose an odd $u\in \Z$ so that $\modulus{v \alpha - u}$ is minimal.
    If 
    \[
    	\modulus{v\alpha - u} 
    	\geq   \min\{\modulus{1 + \alpha},1\}
    	\eqqcolon c_0,
    \] 
    then the upper bound is trivially satisfied for any $C\geq 4 /c_0^2$ because $h(t)\leq 4$.
    On the other hand, if
    \[
    	\modulus{v\alpha - u} 
    	< c_0,
    \] 
    we set
    \[
    	\delta\coloneqq - \frac{v \alpha - u}{1+\alpha}
    	\quad \text{and} \quad
    	t_0 \coloneqq v + \delta.
    \]
    In particular,
    \[
    	\modulus{\delta} 
    	= \frac{\modulus{v\alpha - u}}{\modulus{1+\alpha}}
    	< \frac{c_0}{\modulus{1+\alpha}}
    	\leq 1,
    \]
    which ensures that $t_0 \in [v-1, v + 1]$. 
    Moreover, by construction, we have $\alpha t_0 = u - \delta$.
    Since $u$ and $v$ are odd, we obtain
    \begin{align*}
    	h(t_0)
    	&= \modulus{2+\e^{\i \pi t_0} + \e^{\i \pi t_0\alpha} }
    	= \modulus{2+\e^{\i v\pi + \i \pi \delta} + \e^{\i u\pi - \i \pi \delta}}
    	= \modulus{2- e^{\i\pi\delta} - e^{-\i\pi\delta}}\\
        &= \modulus{2- 2 \cos(\pi \delta)}
        = 4 \sin^2\left( \frac{\pi\modulus{\delta}  }{2}\right)\\
        &\le 4\left( \frac{\pi\modulus{\delta}  }{2}\right)^2 
        = (\pi \delta)^2
    	= \left(\frac{\pi\modulus{v\alpha - u}}{\modulus{1+\alpha}}  \right)^2.
    \end{align*}
    Therefore,
    \[
    	\inf_{t \in [v-1, v + 1]} h(t)	
    	\le h(t_0)
        \le \frac{ \pi^2}{\modulus{1+\alpha}^2} \modulus{v\alpha - u}^2.
    \]
    In either case, using
    \[
    	C \coloneqq \max\left\{ 
    		\frac{4}{c_0^2}, \frac{ \pi^2}{\modulus{1+\alpha}^2}	
    	\right\}
    \]
    yields the desired upper estimate.
\end{proof}

The lower bound estimate requires the following elementary observation.

\begin{lemma}\label{lem:deltas}
    Let $\alpha \in \R$. Then for every $\delta_1, \delta_2 \in \R$
    we have
    \[
    	(\delta_2 - \alpha\delta_1)^2
    	\leq (1+\alpha^2)  (\delta_1^2 + \delta_2^2).
    \]
\end{lemma}

\begin{proof}
    By Cauchy-Schwarz inequality,
    \begin{align*}
    	(\delta_2 - \alpha\delta_1)^2 
        & = \duality{ (1, -\alpha)  }{ (\delta_2,\delta_1)}^2_2
         \le (1+\alpha^2) (\delta_1^2 + \delta_2^2).
        \qedhere
    \end{align*}
\end{proof}

\begin{proof}[Proof of the lower estimate in Theorem~\ref{thm:upplowh}]
    Let $v\in \bbZ$ be odd and fix $t \in [v-1, v + 1]$.
    Then we can write
    \begin{align}
     t &= v + \delta_1,
    	\label{eq:th}\\
    	t\alpha &= u + \delta_2,
    	\quad \text{with } u \in \Z \text{ odd and } \delta_1, \delta_2 \in [-1, 1].
    	\label{eq:talphah}
    \end{align}
    Since $u$ and $v$ are odd integers, we obtain
    \begin{equation}\label{eq:ht}
        h(t) 
    	=  \modulus{  2 +  e^{\i \pi(v + \delta_1)} + e^{\i\pi (u + \delta_2)}  } 
    	= \modulus{ e^{\i\pi \delta_1} + e^{\i\pi \delta_2} - 2 }.
    \end{equation}
    	
    Let $\epsilon \in (0,1)$ be such that $\epsilon e^{ \epsilon} =1$.
    By continuity,
    \[
        c_1\coloneqq\inf \left\{h(t)\colon t \text{ satisfies~\eqref{eq:th} and~\eqref{eq:talphah} with }
    	\max_i    
    		\modulus{\delta_i}
    	 \geq \epsilon \pi^{-1}\right\}
        >0.
    \]
    Since $\min_{u \textnormal{ odd}}\modulus{v \alpha - u} \leq 1$, we obtain that
    \[
        h(t) \ge c_1 \ge c_1   \left(\min_{u \textnormal{ odd}} \modulus{v \alpha - u}\right)^2
    \]
    whenever $\pi\modulus{\delta_1} \geq \epsilon$ or $\pi\modulus{\delta_2} \geq \epsilon$.
    
    Let us now consider the case that $\modulus{\delta_i'}:=\pi\modulus{\delta_i} < \epsilon$ for $i=1,2$. Due to the second order Taylor expansion and the fact that $y e^{y} \le \epsilon e^{\epsilon} = 1$ for $y \le \epsilon$, we have
    \begin{equation}\label{eq:Taylor}
        \modulus{\e^{\i x} - (1 + \i x - x^2/2)}
        \le \frac{\modulus{x}^3}{6} e^{\modulus{x}} 
        \le \frac{x^2}6,
    \end{equation}
        
    whenever $\modulus{x} \le \epsilon$.
    From \eqref{eq:ht} and \eqref{eq:Taylor} we obtain    
    \begin{align*}
    	h(t)
    	& \geq\modulus{ \i (\delta_1' + \delta_2') - \frac{1}{2}(\delta_1'^2 + \delta_2'^2)}-\frac{1}{6}(\delta_1'^2 + \delta_2'^2) \\
        & = \sqrt{(\delta_1' + \delta_2')^2+\frac14 (\delta_1'^2 + \delta_2'^2)^2  }-\frac{1}{6}(\delta_1'^2 + \delta_2'^2) \\
        & \geq \frac{1}{2}(\delta_1'^2 + \delta_2'^2) - \frac{1}{6}(\delta_1'^2 + \delta_2'^2) \\
        & =  \frac{1}{3}(\delta_1'^2 + \delta_2'^2)
         =  \frac{\pi^2}{3}(\delta_1^2 + \delta_2^2)\\
        & \geq
    	 \frac{\pi^2}{3(1+\alpha^2) }  |\delta_2 - \alpha\delta_1|^2,
    \end{align*}
    where the last inequality is true due to Lemma~\ref{lem:deltas}.
    Since $\delta_2- \alpha \delta_1 = v \alpha - u$, it follows that
    $
    	h(t) \ge c_2  \modulus{v\alpha - u}^2
    $
    with $c_2 \coloneqq \frac{\pi^2}{3(1+\alpha^2) }$.

    In either case, setting $c \coloneqq \min\{c_1, c_2\}$ yields the desired lower bound on $h$.
\end{proof}

\section{Proof of Proposition~\ref{prop:square}}
    \label{sec:proof-square}

    We recall $T_{t,\alpha}$ from \eqref{eq:Talphat}. Due to Theorem~\ref{thm:MforPhs}, both assertions in Proposition~\ref{prop:square} require us to estimate $\|T_{t,\alpha}^{-1}\|$. To this end -- since $\|T_{t,\alpha}\|$ is uniformly bounded in $t \in \bbR$ -- by Cramer's rule, it suffices to bound $1/\det(T_{t,\alpha})$. However, in order to not overcomplicate the proofs, instead of $1/\det(T_{t,\alpha})$ we consider $\det(T_{t,\alpha})$ directly in the following. Therefore, in order to obtain the second assertion in Proposition~\ref{prop:square}, the question becomes when does
    \[
    	g(t) \coloneqq \modulus{ 1 + \frac{1}{2} \left( \e^{\i t} + \e^{\i t\alpha} \right) }
    \]
    get small for particular values of $t$ as $t \to \infty$. Note that this expression makes sense for any $\alpha\in \bbR$ and that $g(t) = \frac{1}{2}h(t/\pi)$ with $h$ from \eqref{eq:defh}. The next lemma gives the answer as well as enables us to disprove exponential stability in Proposition~\ref{prop:strongstab}.

\begin{lemma}\label{lem:upperboundt2}
    Let $\alpha \in \R$ be irrational. Then there is $C>0$ such that for any $t_0 \in \R$ we find $t\geq t_0$ with
    $
    	g(t) 
    	\leq \frac{C}{t^2}.
    $
    \end{lemma}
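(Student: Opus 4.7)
The plan is to reduce the desired bound on $g$ to an odd/odd rational approximation property of $\alpha$ and then invoke Lemma~\ref{lem:h_upp}.

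First, using the observation $g(t)=\tfrac12 h(t/\pi)$ noted just before the statement, I would translate the desired estimate for $g$ into one for $h$. By Lemma~\ref{lem:h_upp}, for every odd integer $v$ one has
\[
   \inf_{s\in[v-1,v+1]} h(s) \leq C_0\bigl(\min_{u\text{ odd}} |v\alpha-u|\bigr)^2,
\]
so it suffices to exhibit arbitrarily large odd integers $v$ admitting an odd $u\in\Z$ with $|v\alpha-u|\leq C_1/v$ for some constant $C_1=C_1(\alpha)$.

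To construct such $v$ I would use continued fractions. Let $(p_n/q_n)$ be the convergents of $\alpha$. Since $\gcd(p_n,q_n)=1$, the parity pair $(p_n\bmod 2,q_n\bmod 2)$ lies in $\{(\mathrm{O},\mathrm{O}),(\mathrm{O},\mathrm{E}),(\mathrm{E},\mathrm{O})\}$, and the identity $p_nq_{n-1}-p_{n-1}q_n=\pm 1$ forces consecutive parity pairs to differ. Two cases arise. If $(\mathrm{O},\mathrm{O})$ occurs for infinitely many $n$, I pick those $n$ and take $v=q_n$, $u=p_n$, giving $|v\alpha-u|<1/q_{n+1}\leq 1/v$. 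Otherwise, from some index onward the parity pair alternates between $(\mathrm{O},\mathrm{E})$ and $(\mathrm{E},\mathrm{O})$; then I set $v=q_n+q_{n-1}$ and $u=p_n+p_{n-1}$, both odd by parity, and combine the standard convergent estimates $|q_k\alpha-p_k|<1/q_{k+1}$ via the triangle inequality to get
\[
    |v\alpha-u|\leq \frac{1}{q_{n+1}}+\frac{1}{q_n}\leq \frac{2}{q_n}\leq \frac{4}{v},
\]
using $v\leq 2q_n$. Either way, arbitrarily large odd $v$ with the required approximation quality exist.

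Finally, for each such $v$, Lemma~\ref{lem:h_upp} produces $s_v\in[v-1,v+1]$ with $h(s_v)\leq C_0C_1^2/v^2$; setting $t=\pi s_v$ yields $t\in[\pi(v-1),\pi(v+1)]$, so $v\geq t/(2\pi)$ once $v$ is large, whence
\[
    g(t)=\tfrac12 h(s_v)\leq \frac{C_0C_1^2}{2v^2}\leq \frac{C}{t^2}
\]
for a suitable $C=C(\alpha)$. Choosing $v$ large enough forces $t\geq t_0$, which is exactly the claim. The main obstacle is the parity bookkeeping in the continued fraction step: once the dichotomy between $(\mathrm{O},\mathrm{O})$-convergents and the alternating regime (in which one must pass to the mediant $(p_n+p_{n-1})/(q_n+q_{n-1})$) is identified, everything else is a direct invocation of Lemma~\ref{lem:h_upp}.
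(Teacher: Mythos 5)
Your proof is correct, and its analytic skeleton coincides with the paper's: rescale $g(t)=\tfrac12 h(t/\pi)$ and feed an odd/odd rational approximation of quality $O(1/v)$ for $|v\alpha-u|$ into the upper bound of Lemma~\ref{lem:h_upp} (the paper packages exactly this step as Lemma~\ref{lem:h_ub_new} with $\psi(t)=1/t$). Where you genuinely diverge is in the number-theoretic input: the paper simply cites the Kuipers--Meulenbeld theorem (Lemma~\ref{lem:odd-odd-approx}), which gives infinitely many odd/odd fractions with $|\alpha-u/v|\le 1/v^2$, whereas you re-derive a slightly weaker but fully sufficient statement ($|v\alpha-u|\le 4/v$ for arbitrarily large odd $v$) from scratch via the parity dichotomy on convergents: either infinitely many convergents are odd/odd, or the parity pairs eventually alternate between odd/even and even/odd, in which case you pass to $u=p_n+p_{n-1}$, $v=q_n+q_{n-1}$ and use the triangle inequality. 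This is essentially a mirror image of the paper's own self-contained argument in the appendix (Lemmas~\ref{lem:oddapp0}, \ref{lem:oddapp1} and \ref{lem:odd-odd-approx_ingrid}, used there for Theorem~\ref{thm:badsquare}), which takes differences $p_{n+1}-p_n$, $q_{n+1}-q_n$ of consecutive convergents instead of your sums and obtains the sharper bound $2/v^2$ for $|\alpha-u/v|$; your constant $4$ is harmless since it is absorbed into $C$. So your route buys self-containedness (no appeal to the cited approximation theorem) at the cost of re-proving a known lemma, while the paper's route is shorter by delegating the arithmetic to the literature; both are sound, and your parity bookkeeping (coprimality excludes even/even, the identity $p_nq_{n-1}-p_{n-1}q_n=\pm1$ forces consecutive parity pairs to differ) is accurate.
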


The proof of the lemma is based on the fact that any irrational number can be approximated quadratically well by fractions of the shape odd/odd (Lemma~\ref{lem:odd-odd-approx}). Recall from the previous section that the behaviour of the function $h$ (and, thus, of $g$) depends on the exact approximation properties of $\alpha$ by fractions of the shape odd/odd. A more detailed view of this is described in the next lemma. Lemma~\ref{lem:upperboundt2} follows immediately from Lemma~\ref{lem:h_ub_new} by setting $\psi(t) = 1/t$ and using Lemma~\ref{lem:odd-odd-approx}.

\begin{lemma}\label{lem:h_ub_new}
    Let $\alpha\in \R$ be irrational and $\psi \colon (0,\infty) \to (0, \infty)$ decreasing. Assume that there exist infinitely many pairs of odd integers $u,v$ such that
    \[
    	\modulus{ \alpha - \frac{u}{v} }
    	\leq \frac{\psi(v)}{v}.
    \]
    Then there exists $C>0$ such that for any $t_0 \in \R$ we find $t\geq t_0$ with
    \[
    	g(t) 
    	\leq C    \left( \psi(\tfrac{t}{\pi}-1) \right)^2.
    \]
\end{lemma}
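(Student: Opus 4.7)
The plan is to combine the explicit upper bound from Lemma~\ref{lem:h_upp} with the approximation hypothesis, and then translate back from $h$ to $g$ via the identity $g(t) = \tfrac{1}{2}h(t/\pi)$ noted in the paper.

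First I would fix an arbitrary $t_0 \in \R$. Since $\alpha$ is irrational, Lemma~\ref{lem:h_upp} supplies a constant $C_0>0$ such that for every odd integer $v$ there exists $s \in [v-1, v+1]$ with
\[
    h(s) \leq C_0 \left(\min_{u \text{ odd}} |v\alpha - u|\right)^2.
\]
Next I would invoke the hypothesis: there are infinitely many pairs of odd integers $(u,v)$ with $|v\alpha - u| \leq \psi(v)$, so in particular I may pick such a pair with $v$ odd and large enough to guarantee $v - 1 \geq t_0/\pi$. For this choice,
\[
    \min_{u' \text{ odd}} |v\alpha - u'| \;\leq\; |v\alpha - u| \;\leq\; \psi(v),
\]
and hence the corresponding $s \in [v-1, v+1]$ satisfies $h(s) \leq C_0\, \psi(v)^2$.

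Setting $t \coloneqq \pi s$, the lower bound $s \geq v-1 \geq t_0/\pi$ gives $t \geq t_0$. The upper bound $s \leq v+1$ rearranges to $v \geq s - 1 = t/\pi - 1$, so by monotonicity of $\psi$ we get $\psi(v) \leq \psi(t/\pi - 1)$. Combining,
\[
    g(t) \;=\; \tfrac{1}{2} h(t/\pi) \;=\; \tfrac{1}{2} h(s) \;\leq\; \tfrac{C_0}{2}\, \psi(v)^2 \;\leq\; \tfrac{C_0}{2}\, \big(\psi(t/\pi - 1)\big)^2,
\]
so the constant $C \coloneqq C_0/2$ (depending only on $\alpha$) works.

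The argument is essentially bookkeeping, and I do not foresee a real obstacle: the whole analytic content was already packaged into Lemma~\ref{lem:h_upp}, and the only points to verify carefully are (i) that infinitely many admissible $(u,v)$ let us force $t \geq t_0$, and (ii) that the direction of monotonicity of $\psi$ is the right one, which is exactly why the formulation has $\psi(t/\pi - 1)$ rather than $\psi(t/\pi)$ on the right-hand side.
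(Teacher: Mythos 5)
Your argument is correct and follows essentially the same route as the paper's proof: you apply the upper bound of Lemma~\ref{lem:h_upp} on the interval $[v-1,v+1]$ for arbitrarily large odd $v$ from the approximation hypothesis, use the monotonicity of $\psi$ via $v\geq t/\pi-1$, and convert $h$ to $g$ through $g(t)=\tfrac12 h(t/\pi)$. The only point you gloss over is that Lemma~\ref{lem:h_upp} bounds the infimum rather than producing a point $s$ with $h(s)\leq C_0(\min_{u\text{ odd}}|v\alpha-u|)^2$; this is harmless, since continuity of $h$ and compactness of $[v-1,v+1]$ (as the paper notes) show the infimum is attained.
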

\begin{proof}
    First, note that we may assume that $\psi(t) \to 0$ for $t\to \infty$ because otherwise, the statement is trivial.
    By assumption we have
    \[
        \modulus{v\alpha - u}^2 \leq (\psi(v))^2
    \]
    for infinitely many odd integers $u,v$.
    In particular, this means that we have infinitely many odd integers $1\leq v_1 < v_2 < \ldots$ which satisfy the above inequality for some odd $u_n$'s.
    Recall the function $h$ from~\eqref{eq:defh}.
    Compactness of the interval $[v_n-1,v_n+1]$ yields $t_n \in [v_n-1,v_n+1]$ such that
    \begin{align*}
        h(t_n)
        = \inf_{t \in [v_n-1,v_n+1]} h(t)
        &\leq C   \left(\min_{u \text{ odd}} |v_n \alpha - u|\right)^2\\
        &\leq C   (\psi(v_n))^2
        \leq C   (\psi(t_n-1))^2;
    \end{align*}
    where the first inequality is the upper bound from Theorem~\ref{thm:upplowh}.
    Since $t_n \to \infty$ for $n\to \infty$, reformulating the statement in terms of $g$, the assertion follows.
\end{proof}

Next, we want to prove a lower bound for $g(t)$, which holds for all sufficiently large $t$. It will again depend on the approximation properties of $\alpha$. 

\begin{lemma}\label{lem:lowerboundpsi_new}
    Let $\alpha \in \R$ be irrational and $\psi \colon (0,\infty) \to (0, \infty)$ be a decreasing function such that the inequality
    \[
    	\modulus{ \alpha - \frac{u}{v} }
    	< \frac{\psi(v)}{v}
    \]
    has finitely many solutions in odd integers $u, v$.
    Then there exists $c>0$ such that
    \begin{equation}\label{eq:lowerboundpsig}
    	c    (\psi(\tfrac{t}{\pi}+1))^2 
    	\leq  g(t)\qquad (t\ge 0).
    \end{equation}
\end{lemma}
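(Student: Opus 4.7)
\medskip

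\textbf{Proof plan for Lemma \ref{lem:lowerboundpsi_new}.} The strategy is to translate the problem into one about odd/odd rational approximations of $\alpha$ via the substitution $s = t/\pi$, which gives $g(t) = h(s)/2$ with $h$ as in \eqref{eq:defh}, and then to apply the lower bound from Theorem \ref{thm:upplowh} together with the hypothesis on $\psi$. Concretely, for any $s \geq 0$ one can choose an odd positive integer $v$ with $s \in [v-1, v+1]$, since consecutive odd integers are distance $2$ apart. Theorem \ref{thm:upplowh} then furnishes a constant $c_0 > 0$ (independent of $v$) with
\[
    h(s) \;\geq\; c_0 \left( \min_{u \text{ odd}} \modulus{v\alpha - u} \right)^2 .
\]

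The quantitative input on $\psi$ enters next. By the hypothesis that $\modulus{\alpha - u/v} < \psi(v)/v$ has only finitely many odd solutions $(u,v)$, there exists $v_0 \in \N$ such that for every odd $v \geq v_0$ and every odd $u \in \Z$ one has $\modulus{v\alpha - u} \geq \psi(v)$. Combining this with the previous step, and using that $v \leq s + 1$ together with the monotonicity of $\psi$, yields for all $t$ with $s = t/\pi \geq v_0 - 1$ the estimate
\[
    g(t) \;=\; \tfrac{1}{2} h(s) \;\geq\; \tfrac{c_0}{2}\, \psi(v)^2 \;\geq\; \tfrac{c_0}{2}\, \psi(\tfrac{t}{\pi} + 1)^2 .
\]

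It remains to handle the bounded range $t \in [0, \pi(v_0 - 1)]$. Here the key observation is that $g$ is continuous and strictly positive on this compact set: indeed $g(t) = 0$ would force $\e^{\i t} = \e^{\i \alpha t} = -1$, so that both $t$ and $\alpha t$ are odd multiples of $\pi$, making $\alpha$ rational and contradicting the hypothesis. Hence $g$ attains a positive minimum $c_1 > 0$ on this interval, while $\psi(t/\pi + 1) \leq \psi(1)$ since $\psi$ is decreasing and $t/\pi + 1 \geq 1$. Setting $c \coloneqq \min\{c_0/2,\, c_1/\psi(1)^2\}$ produces the claimed bound \eqref{eq:lowerboundpsig} uniformly in $t \geq 0$.

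The substantive work is not in this lemma itself but is already encapsulated in Theorem \ref{thm:upplowh}; the only genuine step here is extracting the uniform lower bound $\modulus{v\alpha - u} \geq \psi(v)$ from the finiteness hypothesis, which is essentially a contrapositive. The minor technicality is stitching together the asymptotic estimate with the compact small-$t$ regime, which is handled by positivity and continuity as above.
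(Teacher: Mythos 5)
Your proposal is correct and follows essentially the same route as the paper: rescale to $h(t/\pi)$, apply the lower bound of Theorem~\ref{thm:upplowh} with the odd integer nearest to $t/\pi$, and convert the finiteness hypothesis into a uniform lower bound $\min_{u\text{ odd}}|v\alpha-u|\gtrsim\psi(v)$, finishing with the monotonicity of $\psi$. The only (cosmetic) difference is how the finitely many exceptional pairs are treated -- the paper absorbs them into the constant $c_1$ using the irrationality of $\alpha$, while you discard them via a threshold $v_0$ and cover the remaining compact $t$-range by continuity and positivity of $g$; apart from an off-by-a-constant choice of that threshold (take $t/\pi\geq v_0+1$ so the nearest odd $v$ is indeed $\geq v_0$), the argument is sound.
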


Because of Lemma~\ref{lem:Khintchin}, the assumptions of Lemma~\ref{lem:lowerboundpsi_new} are satisfied, in particular, for $\psi(t) = 1/(t (\log t)^{1+\eps})$ and for almost all $\alpha$.

\begin{proof}[Proof of Lemma~\ref{lem:lowerboundpsi_new}]
    By assumption, we can find a constant $c_1>0$ such that
    \[
        \modulus{ v \alpha - u }
    	\geq c_1   \psi(v)
    \]    
    for all odd integers $u$ and $v$.
    For any $t\geq 0$, let us denote by $v_t$, the closest odd integer to $t$. Then by Theorem~\ref{thm:upplowh} -- using $h$ as in \eqref{eq:defh} -- we can find $c_2>0$ such that
    \begin{align*}
        h(t)
        \geq \inf_{s\in [v_t-1, v_t+1]} h(s)
        &\geq c_2   \left(\min_{u \text{ odd}} |v_t \alpha - u|\right)^2\\
        &\geq c_2   c_1^2   (\psi(v_t))^2
        \geq c_2   c_1^2   (\psi(t + 1))^2.
    \end{align*}
    Reformulation in terms of $g$ instead of $h$ yields the result.
\end{proof}

Setting $\psi(t) \coloneqq 1/(t (\log t)^{1+\frac{\eps'}{2}})$ for some $0<\eps'<\eps$ and using Lemma~\ref{lem:Khintchin}, we immediately get the following metric result for the lower bound.

\begin{lemma}\label{lem:lowerboundae}
For Lebesgue a.e.~$\alpha \in \R$  and all $\eps>0$, there is $t_0>0$ such that 
\begin{equation*}
	\frac{1}{t^2 (\log t)^{2 + \eps}} 
	\leq g(t) \quad(t\geq t_0).
\end{equation*}
\end{lemma}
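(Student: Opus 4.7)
The plan is to invoke the deterministic lower bound of Lemma~\ref{lem:lowerboundpsi_new} for a carefully chosen weight $\psi$, verify its hypothesis via the metric/convergence-case Diophantine statement Lemma~\ref{lem:Khintchin}, and then compare the resulting estimate with the target $1/(t^2(\log t)^{2+\eps})$. This is exactly the strategy signalled in the sentence preceding the statement, so the work is to pick $\psi$ correctly and to check the asymptotics.

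Fix $\eps>0$ and choose an auxiliary $\eps' \in (0,\eps)$. Set
\[
   \psi(t) \coloneqq \frac{1}{t(\log t)^{1+\eps'/2}}
\]
for $t$ large enough that $\log t>0$ (extended arbitrarily to small $t$ so that it remains positive and decreasing). Since $1+\eps'/2>1$, the series $\sum_{n\geq N_0} \psi(n)$ converges, so Lemma~\ref{lem:Khintchin}, applied in its convergence case for approximations by fractions of odd/odd type, yields that for Lebesgue almost every $\alpha \in \R$ the inequality
\[
   \modulus{\alpha - \frac{u}{v}} < \frac{\psi(v)}{v}
\]
admits only finitely many solutions in odd integers $u,v$. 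Fix such an $\alpha$. Lemma~\ref{lem:lowerboundpsi_new} then supplies a constant $c>0$ with
\[
   g(t) \geq c\,\bigl(\psi(\tfrac{t}{\pi}+1)\bigr)^2 \quad (t\geq 0).
\]

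To finish, I would estimate the right-hand side asymptotically. As $t\to\infty$,
\[
   \bigl(\psi(\tfrac{t}{\pi}+1)\bigr)^2
      = \frac{1}{(t/\pi+1)^2\bigl(\log(t/\pi+1)\bigr)^{2+\eps'}}
      \;\geq\; \frac{C}{t^2(\log t)^{2+\eps'}}
\]
for some $C>0$ and $t$ large; since $\eps'<\eps$, the additional factor $(\log t)^{\eps-\eps'}\to\infty$ absorbs $c C$, giving $c\bigl(\psi(\tfrac{t}{\pi}+1)\bigr)^2 \geq 1/(t^2(\log t)^{2+\eps})$ for all $t\geq t_0$ with $t_0$ depending on $\alpha$ and $\eps$. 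Combining with the previous display yields the claim.

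The only non-routine point is ensuring that Lemma~\ref{lem:Khintchin} delivers a Khintchine-type convergence statement \emph{restricted to odd numerator and denominator}; once that is granted, the argument is bookkeeping. I would expect this restriction to be handled in the appendix by treating the at most four residue classes $(u,v)\bmod 2$ separately and observing that the convergence hypothesis on $\psi$ is inherited by the odd/odd sub-series, so the exceptional null set persists after taking the union.
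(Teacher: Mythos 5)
Your proof is correct and follows essentially the paper's own route: apply Lemma~\ref{lem:lowerboundpsi_new} with $\psi(t)=1/(t(\log t)^{1+\eps'/2})$ for some $0<\eps'<\eps$, verify its finiteness hypothesis via Lemma~\ref{lem:Khintchin}, and absorb the constants into the surplus factor $(\log t)^{\eps-\eps'}$. Your closing concern is unnecessary: the hypothesis of Lemma~\ref{lem:lowerboundpsi_new} only asks for finiteness of solutions with $u,v$ odd, which is a \emph{subset} of all integer pairs, so the unrestricted statement of Lemma~\ref{lem:Khintchin} already implies it and no parity-restricted Khintchine-type result (nor any splitting into residue classes) is needed.
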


\begin{proof}[Proof of Proposition~\ref{prop:square}] 
    Keeping the discussion at the beginning of the section in mind, both assertions follow by appealing to Lemmata~\ref{lem:upperboundt2} and~\ref{lem:lowerboundae}.
\end{proof}

\section{Proof of Proposition~\ref{prop:othertypes}}
    \label{sec:proof-othertypes}

The result in Proposition~\ref{prop:square} only qualifies for Lebesgue-almost every $\alpha\in \R$. In fact, one can construct specific $\alpha$'s such that $g(t)$ decays arbitrarily quickly for the ``worst'' $t$'s. This is the content of the present section. 
Recall that
\[
    g(t) =  \modulus{ {1 + \frac{1}{2}(\e^{\i t} + \e^{\i t\alpha})}}\qquad(t\in \R).
\]

\begin{lemma}\label{lem:construction}
Let $f \colon (0, \infty) \to (0, \infty)$ be decreasing. There exists an irrational $\alpha >0$ such that the following assertions hold for some $c,C>0$.
\begin{enumerate}[\upshape (a)]
\item For all $t_0>\pi$, there is $t\geq t_0$ with
\begin{equation}\label{eq:const_ub}
	g(t)
	\leq C  f(t-\pi).
\end{equation}
\item If $\lim_{t\to\infty} f(t)  t^2 = 0$, thenthere is $t_1>0$ such that for all $t\geq t_1$ we have
\begin{equation}\label{eq:const_lb}
	 c  f(t+\pi)
	\leq g(t).
\end{equation}
\end{enumerate}
\end{lemma}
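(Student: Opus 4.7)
The plan is to reduce both assertions to Diophantine approximation statements about $\alpha$ via Lemmata~\ref{lem:h_ub_new} and~\ref{lem:lowerboundpsi_new}, and then to construct $\alpha$ as the limit of a rapidly growing sequence of odd/odd fractions. Setting $\psi(s)\coloneqq\sqrt{f(\pi s)}$, which is decreasing, one has $\psi(t/\pi-1)^2=f(t-\pi)$ and $\psi(t/\pi+1)^2=f(t+\pi)$. Hence it suffices to construct an irrational $\alpha>0$ and constants $0<k<K$ such that there are infinitely many odd pairs $(u,v)$ with $\modulus{v\alpha-u}\leq K\psi(v)$, and such that, under the hypothesis of (b), only finitely many odd pairs $(u,v)$ satisfy $\modulus{v\alpha-u}<k\psi(v)$. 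The first gives (a) via Lemma~\ref{lem:h_ub_new}, the second gives (b) via Lemma~\ref{lem:lowerboundpsi_new}. The case $f\not\to 0$ is trivial (take any irrational $\alpha$, e.g.\ $\sqrt 2$), so we assume $f\to 0$, equivalently $\psi\to 0$.

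I would construct $\alpha$ inductively. Start from odd $u_1,v_1$. Given odd $u_n,v_n$, choose an odd $v_{n+1}$ in the range $[Cv_n,\,1/(2K\psi(v_n))]$, which is non-empty for large $n$, and an odd $u_{n+1}$ with $\modulus{u_{n+1}/v_{n+1}-u_n/v_n}\in[\tfrac12\psi(v_n)/v_n,\,2\psi(v_n)/v_n]$. Sufficiently fast growth makes the telescoping series summable geometrically, so $\alpha\coloneqq\lim_n u_n/v_n$ exists, is positive, and satisfies $\modulus{v_n\alpha-u_n}\in[\tfrac12\psi(v_n),\,2\psi(v_n)]$ for large $n$. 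Since $\psi(v_n)\to 0$, any putative rational representation $\alpha=p/q$ would force $1/(qv_n)\leq 2\psi(v_n)/v_n$, i.e.\ $\psi(v_n)\geq 1/(2q)$, a contradiction; thus $\alpha$ is irrational. Assertion (a) follows immediately from the upper bound along $(u_n,v_n)$.

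For (b), assume $f(t)t^2\to 0$, equivalently $v\psi(v)\to 0$. Let $(u,v)$ be an odd pair with $v$ large and locate $n$ with $v_n\leq v<v_{n+1}$. If $u/v=u_n/v_n$ as rationals, then $\modulus{v\alpha-u}=(v/v_n)\modulus{v_n\alpha-u_n}\geq\tfrac12\psi(v_n)\geq\tfrac12\psi(v)$. Otherwise $uv_n-u_nv$ is a non-zero integer, so $\modulus{u/v-u_n/v_n}\geq 1/(vv_n)$, and the triangle inequality combined with $\modulus{\alpha-u_n/v_n}\leq 2\psi(v_n)/v_n$ yields
\[
  \modulus{v\alpha-u}\geq \frac{1}{v_n}-\frac{2v\psi(v_n)}{v_n}\geq\frac{1}{2v_n}
\]
by our cap $v\leq v_{n+1}\leq 1/(2K\psi(v_n))$ (taking $K\geq 2$). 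Because $v_n\psi(v_n)\to 0$ and $\psi$ is decreasing, $\psi(v)\leq\psi(v_n)\leq 1/(2v_n)$ eventually, so $\modulus{v\alpha-u}\geq\tfrac12\psi(v)$, giving (b) with $k=\tfrac12$.

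The main obstacle is the simultaneous calibration of the growth rate of the $v_n$: it must be fast enough that $u_n/v_n\to\alpha$ with $\modulus{v_n\alpha-u_n}\asymp\psi(v_n)$, yet capped by $1/(2K\psi(v_n))$ so that no intermediate odd denominator $v\in(v_n,v_{n+1})$ accidentally produces an odd approximant to $\alpha$ better than $\psi(v)/v$. The hypothesis $f(t)t^2\to 0$ in (b) is essential here: if $v\psi(v)$ does not vanish, the upper cap on $v_{n+1}$ collapses to nothing, and, more fundamentally, the rate $\psi(v)$ then competes with Dirichlet's $1/v$-threshold, so no uniform odd/odd lower bound of the form $\modulus{v\alpha-u}\geq k\psi(v)$ can be sustained.
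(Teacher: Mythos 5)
Your overall reduction is sound and matches the paper's: with $\psi(s)\coloneqq\sqrt{f(\pi s)}$, part (a) follows from Lemma~\ref{lem:h_ub_new} once $\alpha$ admits infinitely many odd/odd approximations with $\modulus{v\alpha-u}\lesssim\psi(v)$, and part (b) follows from Lemma~\ref{lem:lowerboundpsi_new} once all but finitely many odd/odd pairs satisfy $\modulus{v\alpha-u}\gtrsim\psi(v)$. The gap is in the construction of $\alpha$: the inductive step cannot be carried out as specified. If $u$, $u_n$, $v_n$, $v_{n+1}$ are all odd, then $uv_n-u_nv_{n+1}$ is even, so either $u/v_{n+1}=u_n/v_n$ or
\[
  \modulus[\Big]{\frac{u}{v_{n+1}}-\frac{u_n}{v_n}}\;\geq\;\frac{2}{v_nv_{n+1}}\;\geq\;\frac{4K\,\psi(v_n)}{v_n},
\]
where the last inequality uses your cap $v_{n+1}\leq 1/(2K\psi(v_n))$. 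Since $4K>2$, no odd $u_{n+1}$ can land in your prescribed window $[\tfrac12\psi(v_n)/v_n,\,2\psi(v_n)/v_n]$; the set of admissible $u_{n+1}$ is empty at every step. To make the window reachable one needs $v_{n+1}\gtrsim 1/\psi(v_n)$, but then the cap is violated and your lower-bound argument in (b) collapses: for intermediate odd $v$ with $v_n\leq v<v_{n+1}$ the subtracted term $2v\psi(v_n)/v_n$ can be of size $\approx 4/v_n$, swamping the $1/v_n$ coming from $\modulus{uv_n-u_nv}\geq 1$, so the triangle inequality no longer yields $\modulus{v\alpha-u}\gtrsim\psi(v)$. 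Thus the ``calibration'' you flag as the main obstacle is not merely delicate; in the form proposed, the two requirements are mutually exclusive.

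This is precisely the point where the paper invokes continued-fraction structure rather than a bare telescoping construction. Taking $\alpha=[1;a_1,a_2,\ldots]$ with even partial quotients $a_{n}=2\lceil 1/(q_{n-1}\sqrt{f(\pi q_{n-1})})\rceil$ forces (by the recurrences of Lemma~\ref{lem:cont_frac_recurrence}) every second convergent to be of the shape odd/odd and, by Lemma~\ref{lem:cont_frac_bounds}, to satisfy $\modulus{\alpha-p_n/q_n}<\sqrt{f(\pi q_n)}/q_n$, which gives (a). For (b), instead of controlling intermediate denominators by hand, the paper uses Legendre's theorem (Lemma~\ref{lem:Legendre}), which needs exactly the hypothesis $f(t)t^2\to 0$ to confine every sufficiently good approximation to the convergents, and then the lower convergent bound $\modulus{\alpha-p_n/q_n}>1/((a_{n+1}+2)q_n^2)$ to show that even the convergents never beat $\sqrt{f(\pi q_n)}/(4q_n)$. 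Some substitute for this best-approximation input (or an equivalently strong structural control over \emph{all} intermediate denominators up to $\approx 1/\psi(v_n)$) is what your argument is missing.
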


The proof of Lemma~\ref{lem:construction} is based on  continued fraction expansions for real numbers. In a particular sense, the continued fraction expansion gives the optimal approximations of irrational numbers by rationals. Some results of the theory of Diophantine approximation are gathered in Appendix~\ref{appendix:diophantine}.

\begin{definition}\label{def:continued_frac}
The \emph{continued fraction} of an $\alpha \in \bbR$ is an expression of the form
\[
	a_0+\cfrac{1}{a_1 +\cfrac{1}{a_2 +\cfrac{1}{\ddots + \cfrac{1}{a_n}}}}
	\qquad \mbox{or} \qquad
	a_0+\cfrac{1}{a_1+\cfrac{1}{a_2+\cfrac{1}{a_3+\ddots}}}
\]
where $a_0,a_1,a_2,\dots$ are obtained from the \emph{continued fraction algorithm}:
Set $a_0\coloneqq\floor{\alpha}$. If $a_0\neq \alpha$, write $\alpha = a_0 + {1}/{\alpha_1}$, where $\alpha_1>1$ and set $a_1\coloneqq\floor{\alpha_1}$. Next, if $a_1\neq \alpha_1$, we write $\alpha_1=a_1+{1}/{\alpha_2}$, where $\alpha_2>1$ and set $a_2\coloneqq\floor{\alpha_2}$. Continue this process until $a_n=\alpha_n$ for some $n$. If $\alpha$ is rational, the process terminates and we obtain a finite continued fraction, which is written as $[a_0;a_1,a_2,\dots,a_n]$. If $\alpha$ is irrational, then the process does not terminate and we obtain an infinite continued fraction, written as $[a_0;a_1,a_2,\dots]$. 
The rationals
$
	\frac{p_j}{q_j}=[a_0;a_1,a_2,\dots,a_j],
$
for relatively prime integers $p_j, q_j$ with $q_j>0$, are called the \emph{convergents to $\alpha$}.
For irrational $\alpha$ the convergents indeed converge to $\alpha$, and there is a one-to-one correspondence between the irrationals and the infinite continued fraction expansions.
\end{definition}

\begin{proof}[Proof of Lemma~\ref{lem:construction}]
(a) We set $\alpha = [1; a_1, a_2, \ldots]$ by recursively defining
\[
	a_{n} \coloneqq 2\ceil[\Big]{\frac{1}{\sqrt{f(\pi q_{n-1})}  q_{n-1}}}    
	\quad \text{for } n \geq 1,
\]
where $q_n = a_nq_{n-1}+q_{n-2}$ for $n\geq 2$ and $q_0=1$ and $q_1 = a_1$, as in Lemma~\ref{lem:cont_frac_recurrence}.
We turn to (a) first and show~\eqref{eq:const_ub} for infinitely many, arbitrarily large $t$. 
By Lemma~\ref{lem:cont_frac_bounds}, we have for all convergents that
\begin{equation}
    \label{eq:construction-ub}
	\modulus{ \alpha - \frac{p_n}{q_n} }
	< \frac{1}{q_n^2 a_{n+1}}
	< \frac{\sqrt{f(\pi q_{n})}}{q_n}.
\end{equation}
Moreover, since $a_n$ is even for $n\geq 1$, one can check that by Lemma~\ref{lem:cont_frac_recurrence} the convergents are of the following shapes: if $n$ is even, then $p_n/q_n = \odd/\odd$; if $n$ is odd, then $p_n/q_n = $ odd/even.
Taking only the convergents of the shape odd/odd, we get infinitely many fractions $u/v = \odd/\odd$ with
\[
	\modulus{ \alpha - \frac{u}{v} }
	< \frac{\sqrt{f(\pi v)}}{v}.
\]
Then Lemma~\ref{lem:h_ub_new} immediately implies
that there exists $C>0$ such that for any $t_0 >\pi$ there exists $t\geq t_0$ with
$
    g(t)
	\leq C  f(t-\pi).
$

(b)  We want to employ Lemma~\ref{lem:lowerboundpsi_new}. For this purpose, we show that
the inequality
\begin{equation}\label{eq:ex_cprime}
	\modulus{ \alpha - \frac{u}{v} }
	< \frac{\sqrt{f(\pi v)}/4}{v}
\end{equation}
has only finitely many solutions in odd integers $u, v$.

First, note that since $f(t)  t^2 \to 0$ for $t\to \infty$,
the right-hand side of~\eqref{eq:ex_cprime} becomes less than $1/(2v^2)$ for sufficiently large $v$. Thus, Lemma~\ref{lem:Legendre} implies that every solution $u/v$ to~\eqref{eq:ex_cprime}, with $v$ sufficiently large, has to be a convergent to $\alpha$.
Therefore, it suffices to check that only finitely many convergents $u/v=p_n/q_n$ satisfy~\eqref{eq:ex_cprime}.
The lower bound for approximations by convergents from Lemma~\ref{lem:cont_frac_bounds} tells us that
\begin{equation}\label{eq:anplus1}
\frac{1}{(a_{n+1}+2)   q_n^2}
	< \modulus{ \alpha - \frac{p_n}{q_n} }.
\end{equation}
Note that
\[
	(a_{n+1}+2)  q_n^2 
	= \left(2\ceil[\Big]{\frac{1}{q_{n}\sqrt{f(\pi q_{n})} }}+2 \right)  q_n^2
	\leq 3  \frac{q_n}{\sqrt{f(\pi q_{n})}}
\]
for sufficiently large $q_n$. Thus,~\eqref{eq:anplus1} in conjunction with~\eqref{eq:construction-ub} gives
\[	
\frac{\sqrt{f(\pi q_{n})}}{3q_n}
	< \modulus{ \alpha - \frac{p_n}{q_n} }
	< \frac{\sqrt{f(\pi q_n)}}{4q_n},
\]
which is clearly impossible.
Therefore, inequality~\eqref{eq:ex_cprime} only has finitely many solutions $u/v$, and Lemma~\ref{lem:lowerboundpsi_new} implies that there is $c_1>0$ and $t_1>0$ such that
\begin{equation*}
	\frac{c_1}{16}  f(t+\pi)
	\leq g(t) \qquad(t\geq t_1).
    \qedhere
\end{equation*}
\end{proof}

\begin{example}\label{ex:alpha_wellapprox}
For $f(t) = e^{-t}$ in Lemma~\ref{lem:construction}, $\alpha$ is constructed  by setting $\alpha = [1; a_1, a_2, \ldots]$; where
\[
	a_{n} \coloneqq 2\ceil[\Big]{\frac{1}{e^{-\pi q_{n-1}/2}  q_{n-1}}}   
	\quad \text{for } n \geq 1.
\]
Then we have
\[
	c  e^{-t}
	\leq 
	g(t)
	\leq
	C   e^{-t};
\]
where the left inequality holds for all $t$ sufficiently large, and the right inequality is satisfied for infinitely many $t$ accumulating at $\infty$.
\end{example}

\begin{proof}[Proof of Proposition~\ref{prop:othertypes}]
    Recalling the discussion at the beginning of Section~\ref{sec:proof-square} and
    setting $f=\frac{1}{\gamma}$, the statement is a direct consequence of Lemma~\ref{lem:construction}.
\end{proof}

\section{Proof of Proposition~\ref{prop:badsquare}}
    \label{sec:posinc}

For convenience, let us consider
\[
    \widetilde{m}_\alpha(\eta)
    \coloneqq m_\alpha(\pi\eta) /2
    = \sup_{t \in [-\eta, \eta]} h(t)^{-1};
\]
where
$
    h(t) \coloneqq \modulus{2+\e^{\i \pi t}+\e^{\i \pi \alpha t}}.
$
Clearly, $\widetilde{m}_\alpha(\eta)$  grows quadratically if and only if $m_\alpha(\eta)$ does.
It is a basic fact (due to Lemma~\ref{lem:cont_frac_bounds}) that $[a_0; a_1, a_2, \ldots]$ is badly approximable (see Definition before Theorem~\ref{thm:decayest}) if and only if $(a_n)_n$ is bounded. 

The decisive information establishing Proposition~\ref{prop:badsquare}(a) is the following.

\begin{lemma}\label{lem:oddoddconsant} 
    Let $\alpha\in \bbR$ be badly approximable and let $(v_n)$ be a strictly increasing sequence such that
    \[
        \{v_n:n\in \N\} = \{ v\in \N: v\textnormal{ odd and there is odd  $u\in \N$ with } \modulus{v\alpha-u}\leq 2/v\}.
    \]
    Then there exists $C\geq 0$ such that 
    $
       v_{n+1}\leq C v_n
    $    
    for all $n\in \N$.
\end{lemma}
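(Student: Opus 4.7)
My plan is to exploit the continued fraction expansion of $\alpha$. Write $\alpha = [a_0; a_1, a_2, \dots]$; since $\alpha$ is badly approximable, Lemma~\ref{lem:cont_frac_bounds} (together with the standard equivalence, see e.g.\ the fact that the $(a_n)$ are bounded iff $\alpha$ is badly approximable) gives a constant $A$ with $a_j \le A$ for every $j\ge 1$, whence the convergent denominators satisfy $q_{k+1} = a_{k+1}q_k + q_{k-1} \le (A+1)q_k$. Set $D_k \coloneqq |q_k\alpha - p_k|$; by Lemma~\ref{lem:cont_frac_bounds} one has $D_k \le 1/q_{k+1}$. The strategy is to build, for each sufficiently large $k$, an explicit element $v^{(k)}$ of the set $S \coloneqq \{v_n\}$ at the ``scale'' of $q_k$. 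The required ratio bound then follows from the geometric growth of $(q_k)$.

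For the construction, I use that $p_k q_{k-1} - p_{k-1} q_k = \pm 1$ is odd, so the parity pairs $P_{k-1} \coloneqq (p_{k-1}\bmod 2, q_{k-1}\bmod 2)$ and $P_k$ lie in $\{OO, OE, EO\}$ (both even is excluded by $\gcd(p_j, q_j)=1$) and are distinct. I split into three cases. If $P_{k-1} = OO$, take $v^{(k)} \coloneqq q_{k-1}$, $u^{(k)}\coloneqq p_{k-1}$, so $v^{(k)}|v^{(k)}\alpha - u^{(k)}| = q_{k-1}D_{k-1} \le q_{k-1}/q_k \le 1$. If $P_k = OO$, take analogously $v^{(k)}\coloneqq q_k$. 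Otherwise $\{P_{k-1}, P_k\} = \{OE, EO\}$; then the mediant pair $(u^{(k)}, v^{(k)}) \coloneqq (p_{k-1}+p_k, q_{k-1}+q_k)$ is $OO$ (componentwise, $O+E=O$), and, using that the signs of $q_{k-1}\alpha - p_{k-1}$ and $q_k\alpha - p_k$ alternate and $D_k$ is decreasing, I get
\[
   v^{(k)} |v^{(k)}\alpha - u^{(k)}| = (q_{k-1}+q_k)(D_{k-1}-D_k) \le (q_{k-1}+q_k)D_{k-1} \le \frac{q_{k-1}+q_k}{q_k} \le 2.
\]
In all three cases $v^{(k)} \in S$ and $q_{k-1} \le v^{(k)} \le q_{k-1} + q_k \le 2q_k$.

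Now given any $v_n \in S$ (for $n$ large enough that $v_n \ge q_0$), choose the unique $k$ with $q_{k-1} \le v_n < q_k$. Note that in each of the three cases applied at level $k+2$, one has $v^{(k+2)} \ge q_{k+1} > v_n$, so $v_{n+1} \le v^{(k+2)} \le 2q_{k+2}$. Using $q_{k+2} \le (A+1)^2 q_k \le (A+1)^3 q_{k-1} \le (A+1)^3 v_n$, we deduce $v_{n+1} \le 2(A+1)^3 v_n$. Finitely many small-$n$ exceptions (where $u^{(k)}$ might be non-positive or $k$ is too small) can be absorbed by enlarging the constant, giving a uniform $C \coloneqq 2(A+1)^3$ (up to adjustment).

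The main technical obstacle I anticipate is Case~C: convergents themselves need not produce odd/odd approximations (e.g.\ $\alpha = \sqrt{2}-1$, where the parities alternate only between $OE$ and $EO$), so semi-convergents are indispensable, and one must pair the parity analysis with a sufficiently sharp error bound so that the mediant still satisfies the quadratic threshold $v\cdot|v\alpha-u| \le 2$ rather than merely $\le A+1$. The clean bound $2$ in Case~C above relies precisely on the sign cancellation $|D_{k-1}-D_k|$ rather than the cruder $D_{k-1}$, which is why the threshold in the lemma is chosen to be $2$ and not smaller.
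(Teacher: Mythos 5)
Your proof is correct and follows essentially the same route as the paper's: you locate $v_n$ between consecutive convergent denominators, manufacture an odd/odd pair $(u,v)$ with $\modulus{v\alpha-u}\le 2/v$ at the scale of the next convergents (the paper's Lemmata~\ref{lem:oddapp0} and~\ref{lem:oddapp1} use the same parity dichotomy but take the difference $p_{n+1}-p_{n}$, $q_{n+1}-q_{n}$ where you take the mediant $p_{k-1}+p_k$, $q_{k-1}+q_k$), and then conclude exactly as the paper does from the geometric growth $q_{k+2}\le (A+1)^3 q_{k-1}$ forced by bounded partial quotients. The one point to repair is the justification of $D_k\le 1/q_{k+1}$ needed in your Case~C: Lemma~\ref{lem:cont_frac_bounds} as stated only gives $D_k<1/(a_{k+1}q_k)$, which is weaker, so you should instead invoke the standard identity $\modulus{q_k\alpha-p_k}=1/(\alpha_{k+1}q_k+q_{k-1})<1/q_{k+1}$ (with $\alpha_{k+1}$ the complete quotient), which is what makes your mediant bound $\le 2$ — and hence membership of $v^{(k)}$ in the set — legitimate.
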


\begin{proof}
     As $\alpha = [a_0; a_1, a_2, \ldots]$ is badly approximable, $(a_n)$ is bounded by, say $C_1>0$.
    Fix an integer $n\geq 2$ and denote by $(q_k)_k$ the sequence of denominators of the convergents of $\alpha$. Since $q_k\to\infty$, we find $k\in \N$ such that $q_{k-1}\leq v_n < q_{k}$. By Lemma~\ref{lem:oddapp1}, there exist $u,v\in \N$ odd such that $q_{k}\leq v\leq q_{k+2}$ and $\modulus{v\alpha-u}\leq 2/v$.
    Thus, $v> v_n$. As $(v_n)$ is strictly increasing, we get $v_{n+1} \le v\leq q_{k+2}$. 
    
    Since $(q_k)_k$ is strictly increasing,  the recurrence relation in Lemma~\ref{lem:cont_frac_recurrence} gives
    \[
        q_{k+2} = a_{k+2}q_{k+1}+q_k < (C_1+1) q_{k+1}.
    \]
    A recursive application of the above inequality implies
    \[
        q_{k+2} \leq (C_1+1)^3 q_{k-1}.
    \]

    Combining the observations in the previous two paragraphs gives $v_{n+1}\leq q_{k+2}\leq (C_1+1)^3 q_{k-1}\leq  (C_1+1)^3 v_n$, establishing the assertion.
\end{proof}

For $\alpha\in \bbR$, we say that $\alpha=[a_0;a_1,a_2,\ldots]$ has \emph{large odd/odd gaps}, if for any $C>0$ there exists $n\in \N$ such that the $n$-th convergent is of the shape $p_n/q_n = \odd/\odd$ and $a_{n+1}\geq C$. This property is generic in the Lebesgue measure sense (Lemma~\ref{lem:notposdense}). 

\begin{example}
    \label{ex:odd-odd-gaps}
  The number $\alpha$ from Example~\ref{ex:alpha_wellapprox} has large odd/odd gaps because $a_n \to \infty$ for $n \to \infty$, and every other convergent is odd/odd.  
\end{example}

\begin{lemma}\label{lem:not_pos_incr}
If $\alpha \in \bbR$ has large odd/odd gaps, then 
 $m_\alpha$ is not of positive increase.
\end{lemma}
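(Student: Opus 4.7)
The plan is to show that $\widetilde m_\alpha(\eta) := \sup_{t\in[-\eta,\eta]} h(t)^{-1}$ fails to be of positive increase; by Remark~\ref{rem:positive_increase_sandwich} together with Corollary~\ref{cor:MforPhs}, this is equivalent to the stated claim for $m_\alpha$. Writing $\alpha = [a_0; a_1, a_2, \ldots]$ with convergents $p_n/q_n$, the hypothesis furnishes infinitely many indices $n$ (with $q_n\to\infty$) for which $p_n/q_n = \mathrm{odd}/\mathrm{odd}$ and $a_{n+1}$ is as large as we wish. Along such a sequence I will produce pairs $(\eta_n,\lambda_n)$ with $\eta_n = q_n + 1$, $\lambda_n\eta_n = q_{n+1} - 2$, $\lambda_n \gtrsim a_{n+1} \to \infty$, but $\widetilde m_\alpha(\lambda_n\eta_n)/\widetilde m_\alpha(\eta_n)$ uniformly bounded.

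The lower bound $\widetilde m_\alpha(q_n + 1) \geq c\,q_n^2 a_{n+1}^2$ will come from applying the upper-bound half of Theorem~\ref{thm:upplowh} to the odd integer $v = q_n$ (with odd $u = p_n$), together with the classical convergent estimate $|q_n\alpha - p_n| \leq 1/(q_n a_{n+1})$ recalled in Lemma~\ref{lem:cont_frac_bounds}.

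The bulk of the work will go into the matching upper bound $\widetilde m_\alpha(q_{n+1} - 2) \leq C\,q_n^2 a_{n+1}^2$. By evenness of $h$ it suffices to estimate $1/h(t)$ on $[0,q_{n+1}-2]$; for each such $t$ the nearest odd integer $v_t$ has $|v_t| \leq q_{n+1} - 1 < q_{n+1}$, so $v_t$ is not the denominator of a convergent other than possibly $q_n$ itself. The classical best-approximation theorem for convergents (accessible here via the contrapositive of Lemma~\ref{lem:Legendre} for $v_t\neq q_n$, and directly from Lemma~\ref{lem:cont_frac_bounds} when $v_t = q_n$) yields $\min_{u\in\Z} |v_t\alpha - u| \geq 1/((a_{n+1}+2)q_n)$, a bound which only strengthens when $u$ is restricted to odd integers. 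The lower-bound half of Theorem~\ref{thm:upplowh} then gives $h(t) \geq c'/((a_{n+1}+2)q_n)^2$ throughout the interval, delivering the claimed estimate. This step is the delicate one, because it must rule out any hidden "jump" in $\widetilde m_\alpha$ caused by an odd denominator $v$ lying strictly between $q_n$ and $q_{n+1}$.

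Combining the two estimates shows $\widetilde m_\alpha(\lambda_n\eta_n)/\widetilde m_\alpha(\eta_n) \leq K$ for a constant $K$ independent of $n$, whereas the recurrence $q_{n+1} = a_{n+1}q_n + q_{n-1}$ gives $\lambda_n \geq a_{n+1}/3$ for all sufficiently large $n$. If $\widetilde m_\alpha$ were of positive increase with exponent $\beta>0$ and constant $c>0$, then it would follow that $K \geq c(a_{n+1}/3)^\beta$, which as $a_{n+1}\to\infty$ is impossible, establishing the claim.
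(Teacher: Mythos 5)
Your overall strategy is the same as the paper's: pass to $\widetilde m_\alpha(\eta)=\sup_{t\in[-\eta,\eta]}h(t)^{-1}$, get a lower bound for $\widetilde m_\alpha$ at $\eta\approx q_n$ from the upper half of Theorem~\ref{thm:upplowh} applied to the odd/odd convergent $p_n/q_n$ together with Lemma~\ref{lem:cont_frac_bounds}, get a matching upper bound for $\widetilde m_\alpha$ on a window reaching up to just below $q_{n+1}$ from the lower half of Theorem~\ref{thm:upplowh}, and let the large partial quotient $a_{n+1}$ contradict positive increase. (You phrase the contradiction with $\lambda_n\to\infty$ and a uniformly bounded ratio, the paper fixes $\lambda$ with $c\lambda^\beta>C_1/c_1$ and then chooses $n$; these are equivalent.)

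The one genuine problem is in the step you yourself flag as delicate. The assertion that $\modulus{v_t}\leq q_{n+1}-1$ implies ``$v_t$ is not the denominator of a convergent other than possibly $q_n$'' is false: every earlier denominator $q_k$ with $k<n$ is also smaller than $q_{n+1}$, and if $v_t=q_k$ happens to be odd, the contrapositive of Lemma~\ref{lem:Legendre} gives you nothing for $u=p_k$, since $p_k/q_k$ \emph{is} a convergent. In addition, Legendre's lemma is stated for coprime $p,q$, so for a non-reduced fraction $u/v_t$ you need a reduction step, after which the reduced denominator may again be a convergent denominator and the same issue reappears. Both problems vanish if you replace this route by the best-approximation theorem already in the paper, Lemma~\ref{lem:best_approximations}: for all integers $u,v$ with $0<\modulus{v}<q_{n+1}$ one has $\modulus{v\alpha-u}\geq\modulus{q_n\alpha-p_n}$, which combined with the lower bound in Lemma~\ref{lem:cont_frac_bounds} gives the uniform estimate $h(t)\geq c_1\bigl((a_{n+1}+2)q_n\bigr)^{-2}$ on the whole interval (restricting $u$ to odd integers only strengthens the minimum, as you note). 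This is exactly how the paper argues; with that substitution your proof is correct and coincides with the paper's, with the remaining differences being purely cosmetic.
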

\begin{proof}
Again, for simplicity, we consider
\[
    \widetilde{m}_\alpha(\eta)
    \coloneqq m_\alpha(\pi\eta) /2
    = \sup_{t \in [-\eta, \eta]} h(t)^{-1},
\]
as clearly $m_\alpha(\eta)$ is of positive increase if and only if $\widetilde{m}_\alpha(\eta)$ is.

Assume towards a contradiction that $\widetilde{m}_\alpha(\eta)$ is of positive increase, i.e., 
there exist $\beta>0, c\in (0,1]$, and $\eta_0>0$ such that for all $\lambda \geq 1$ and $\eta\geq \eta_0$, we have
\begin{equation}\label{eq:posincm}
    \widetilde{m}_\alpha(\lambda \eta) 
    \geq c \lambda^{\beta} {\widetilde{m}_\alpha(\eta)}.
\end{equation}
By Theorem~\ref{thm:upplowh}, we find $c_1, C_1 > 0$ such that for all odd integers $v$ we have
\[
    c_1   \left(\min_{u \textnormal{ odd}}\modulus{v \alpha - u}\right)^2
    \leq \inf_{t \in [v-1, v + 1]} h(t)
    \leq C_1   \left(\min_{u \textnormal{ odd}} \modulus{v \alpha - u}\right)^2.
\]
We choose $\lambda>1$ such that
\begin{equation}\label{eq:set_lambda}
   c \lambda^\beta > C_1/c_1.   
\end{equation}
Since $\alpha$ has large odd/odd gaps, there is $n\in \N$ such that the $n$-th convergent is of the shape $p_n/q_n = \odd/\odd$ and 
$
	a_{n+1} > 2 \lambda.
$
Without loss of generality, assume $q_n\geq \eta_0$, otherwise choose a larger $n$, and set $\eta \coloneqq q_n + 1$. 
Then by Theorem~\ref{thm:upplowh},
\[
	\inf_{t\in [-\eta,\eta]} h(t)
	\leq \inf_{t \in [q_n-1, q_n+1]} h(t)
	\leq C_1   \modulus{q_n \alpha - p_n}^2,
\]
and thus
\begin{align}\label{eq:malphaeta_lb}
	\widetilde{m}_\alpha(\eta)
	= \sup_{t\in [-\eta,\eta]} h(t)^{-1}
	\geq C_1^{-1} \modulus{q_n \alpha - p_n}^{-2}.
\end{align}
On the other hand, note that by Lemma~\ref{lem:best_approximations} for all integers $u,v$ with $0<|v|<q_{n+1}$ we have $\modulus{\alpha v - u} \geq \modulus{q_n \alpha - p_n}$.
By Lemma~\ref{lem:cont_frac_recurrence}, we have $q_{n+1} = a_{n+1} q_n + q_{n-1} > 2 \lambda q_n +1 \geq \lambda (q_n + 1)+1 = \lambda \eta +1$ and in turn,
\[
	\modulus{\alpha v - u} \geq \modulus{q_n \alpha - p_n}
	\quad \text{for all } |v| \leq \lambda \eta +1.
\]
Thus, Theorem~\ref{thm:upplowh} implies
$
	\inf_{t\in [-\lambda \eta,\lambda \eta]} h(t)
	\geq c_1   \modulus{q_n \alpha - p_n}^2,
$
and so 
\begin{equation}\label{eq:malphaeta_ub}
	\widetilde{m}_\alpha(\lambda \eta)
	\leq c_1^{-1}   \modulus{q_n \alpha - p_n}^{-2}.
\end{equation}
Combining \eqref{eq:malphaeta_ub}, \eqref{eq:malphaeta_lb}, and \eqref{eq:set_lambda}, we obtain
\[
	\widetilde{m}_\alpha(\lambda \eta)
	\leq c_1^{-1}   \modulus{q_n \alpha - p_n}^{-2}
    \leq c_1^{-1} C_1 \widetilde{m}_\alpha(\eta)
    < c \lambda^{\beta} \widetilde{m}_\alpha(\eta), 
\]
contradicting \eqref{eq:posincm}.
\end{proof}

\begin{lemma}\label{lem:notposdense}
    For Lebesgue a.e.~$\alpha \in \bbR$, $\alpha$ has large odd/odd gaps.
\end{lemma}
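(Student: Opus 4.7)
The plan is to encode the parities of successive convergents as a finite-state automaton driven by $(a_n \bmod 2)$, and then apply the ergodicity of the Gauss map to this skew system. Working modulo $2$ in the recurrences $p_n = a_n p_{n-1} + p_{n-2}$ and $q_n = a_n q_{n-1} + q_{n-2}$, the parity $\pi_n \coloneqq (p_n \bmod 2, q_n \bmod 2)$ lies in $\{\odd/\even, \even/\odd, \odd/\odd\}$ by coprimality, and the identity $p_n q_{n-1} - p_{n-1} q_n = (-1)^{n-1}$ forces $\pi_{n-1} \neq \pi_n$. Hence $\sigma_n \coloneqq (\pi_{n-1}, \pi_n)$ takes values in a six-element set $\Sigma$, and the transition is governed by $a_{n+1} \bmod 2$: $\sigma_{n+1} = f_0(\sigma_n)$ when $a_{n+1}$ is even and $\sigma_{n+1} = f_1(\sigma_n)$ when it is odd, where $f_0(X, Y) = (Y, X)$ and $f_1(X, Y) = (Y, X+Y)$ with coordinatewise mod~$2$ addition. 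A direct check gives $f_0^2 = f_1^3 = (f_0 f_1)^2 = \mathrm{id}$, so $\langle f_0, f_1 \rangle \cong S_3$ acts regularly on $\Sigma$; in particular, from every state one reaches an ``$\odd/\odd$-ending'' state in a single step by choosing the appropriate parity.

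Second, I would form the skew product $\hat T(\beta, s) = (T\beta, f_{a_1(\beta) \bmod 2}(s))$ on $[0,1) \times \Sigma$ with invariant measure $\mu_G \times \nu$, where $T$ is the Gauss map, $\mu_G$ is the Gauss measure, and $\nu$ is the counting measure on $\Sigma$ (invariance is immediate since each $f_i$ is a bijection). I would then establish that $\hat T$ is ergodic: since $T$ is Bernoulli and hence strongly mixing, and the cocycle values $f_0, f_1$ generate an $S_3$-action that is transitive on $\Sigma$, a standard $G$-extension argument rules out any non-constant $\hat T$-invariant functions.

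Third, granting ergodicity, for each $C \in \N$ the set $E_C \coloneqq \{(\beta, s) \in [0,1) \times \Sigma : \pi_n(s) = \odd/\odd \text{ and } a_1(\beta) \geq C\}$ has positive $\mu_G \times \nu$-measure (two of six fibres, intersected with the positive-measure cylinder $\{a_1 \geq C\}$), so Birkhoff's theorem gives $\hat T^n(\beta, s_0) \in E_C$ infinitely often for $\mu_G \times \nu$-a.e.~$(\beta, s_0)$. Fubini, together with the fact that $\nu$ has full support on the finite set $\Sigma$, converts this into: for $\mu_G$-a.e.~$\beta$ and \emph{every} initial $s_0 \in \Sigma$ the orbit visits $E_C$ infinitely often; in particular for the actual initial state determined by $\beta$, yielding infinitely many $n$ with $\pi_n(\beta) = \odd/\odd$ and $a_{n+1}(\beta) \geq C$. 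A countable intersection over $C \in \N$ then delivers ``large $\odd/\odd$ gaps'' for Lebesgue-a.e.~$\beta \in (0,1)$. To transfer to $\bbR$, write $\alpha \in (m, m+1)$ as $\alpha = m + \beta$ with $\beta \in (0,1)$; linearity of the recurrence gives $p_n(\alpha) \equiv p_n(\beta) + m\, q_n(\beta)$ and $q_n(\alpha) \equiv q_n(\beta) \pmod{2}$, so $\pi_n(\alpha)$ is a fixed bijection of $\pi_n(\beta)$ depending only on $m \bmod 2$, and the ergodic argument applies with $\odd/\odd$ replaced by the appropriate preimage parity class; the negative reals are treated symmetrically.

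The main obstacle is establishing the ergodicity of $\hat T$. Although the setup is a textbook $G$-skew product over a Bernoulli base with a cocycle whose essential range generates a transitive group action on the fibre, making the argument rigorous requires either invoking a general theorem on ergodic $G$-extensions or carrying out a direct verification: decompose any $\hat T$-invariant $L^2$ function along the irreducible characters of $S_3$ and use the mixing of $T$ combined with the transitivity of $\langle f_0, f_1 \rangle$ to annihilate all non-constant modes.
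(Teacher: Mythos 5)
Your parity bookkeeping (the six-state space $\Sigma$, the transition maps $f_0,f_1$, the relations giving an $S_3$-action) and the concluding Fubini/transfer steps are sound, but the proof has a genuine gap exactly at the point you yourself flag: the ergodicity of the skew product $\widehat T$ is asserted, not proved, and the justification you offer --- the cocycle values $f_0,f_1$ generate a group acting transitively on $\Sigma$ and the Gauss map is mixing --- is not a valid criterion. For finite group extensions, ergodicity fails precisely when the cocycle is measurably cohomologous to a cocycle taking values in a proper subgroup of $S_3$, and this can happen even when the values of the cocycle generate the whole group; mixing (even exactness) of the base does not by itself exclude it. Concretely, you must rule out nonzero measurable solutions $F$ of the twisted equations $F(T\beta)=\rho\bigl(f_{a_1(\beta)\bmod 2}\bigr)F(\beta)$ for the sign representation and for the two-dimensional irreducible representation of $S_3$; already for the sign representation this amounts to showing that the $\pm1$-valued function determined by the parity of $a_1$ is not a multiplicative coboundary for the Gauss map. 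That statement is true, but it needs an actual argument (a measurable Livšic/regularity theorem for Gibbs--Markov maps, or a twisted transfer-operator estimate for locally constant cocycles), not a generic ``standard $G$-extension'' appeal. As written, the central step of your proof is missing, so the argument is incomplete.

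For comparison, the paper avoids ergodic machinery altogether: it cites the fact that Lebesgue-a.e.\ $\alpha$ has a \emph{normal} continued fraction expansion (every finite block of partial quotients occurs), picks a block $a_{n+1}=a_{n+2}=a_{n+3}=C$ with $C$ odd and large, and then uses only the recurrences of Lemma~\ref{lem:cont_frac_recurrence} together with Lemma~\ref{lem:oddapp0} to show that at least one of the three convergents $p_n/q_n$, $p_{n+1}/q_{n+1}$, $p_{n+2}/q_{n+2}$ is of the shape $\odd/\odd$, so it is followed by the partial quotient $C$. If you want to keep your automaton picture but close the gap cheaply, you can do the same: replace the ergodicity of $\widehat T$ by the normality (or an elementary conditional Borel--Cantelli argument) guaranteeing that a.e.\ $\alpha$ contains every finite block infinitely often, and then a purely finite parity check on consecutive states of your automaton produces $\odd/\odd$ convergents followed by arbitrarily large partial quotients; the countable intersection over $C$ and the reduction from $\bbR$ to $(0,1)$ then go through as in your sketch.
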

\begin{proof}
    By \cite[Theorem 9.2]{Bugeaud2012}, for Lebesgue a.e.~$\alpha \in \bbR$, $\alpha$ has a \emph{normal continued fraction} -- in particular, every block of positive integers $d_1, \ldots, d_k$ occurs in the continued fraction expansion $[a_0; a_1, a_2, \ldots]$. Let $\alpha\in \R$ have a normal continued fraction. Then for any odd $C\in \N$, there exists $n\in \N$ such that $a_{n+1} = a_{n+2} = a_{n+3}=C$. So, at least one of the convergents $p_{n}/q_{n}, p_{n+1}/q_{n+1}, p_{n+2}/q_{n+2}$ must be of the shape odd/odd. Indeed, assume by contradiction that none of them are of the shape odd/odd. Then, by Lemma~\ref{lem:oddapp0}, both consecutive denominators as well as consecutive numerators need to change parity. Thus, only the two cases, $p_n/q_n$ odd/even, $p_{n+1}/q_{n+1}$ even/odd, and $p_{n+2}/q_{n+2}$ odd/even along with $p_n/q_n$ even/odd, $p_{n+1}/q_{n+1}$ odd/even, and $p_{n+2}/q_{n+2}$ even/odd need to be rendered impossible. Recalling Lemma~\ref{lem:cont_frac_recurrence}, we obtain the two equalities
    \[
         q_{n+2} = a_{n+2}q_{n+1} +q_{n} \quad \text{ and } \quad
        p_{n+2} = a_{n+2}p_{n+1} +p_{n}.
    \]
    The first equality, as $a_{n+2}$ is odd, yields if $q_n$ and $q_{n+2}$ both are even, then so is $q_{n+1}$, excluding the first case. Whereas the second equality necessitates $p_{n+1}$ to be even, if both $p_{n+2}$ and $p_{n}$ are, eliminating the second case as well. Hence, we find $\ell\in \{n,n+1,n+2\}$ such that $p_\ell/q_\ell$ is of the shape odd/odd. Since $a_{\ell+1}=C$ can be chosen arbitrarily large, we eventually establish the assertion.
\end{proof}

\begin{proof}[Proof of Proposition~\ref{prop:badsquare}]
    Firstly,~(b) is a direct consequence of Lemmata~\ref{lem:not_pos_incr} and~\ref{lem:notposdense}.
    
    (a) Let $\alpha\in (0,\infty)$ be badly approximable. We want to show that there exists $c,C,\eta_0>0$ such that
    \[
       c\eta^2 \leq  \widetilde{m}_\alpha (\eta) \leq C\eta^2\qquad \text{for all }\eta \ge \eta_0.
    \]
    As $\alpha$ is badly approximable, the assumptions of Lemma~\ref{lem:lowerboundpsi_new}
    are fulfilled with $\psi(v)=\tilde cv^{-1}$ for some $\tilde c>0$. The required upper bound can therefore be obtained by Lemma~\ref{lem:lowerboundpsi_new}, again owing to the discussion at the beginning of Section~\ref{sec:proof-square}.
    
    We now prove the lower bound. Since $\alpha$ is irrational, by Lemma~\ref{lem:odd-odd-approx_ingrid}, we find a strictly increasing sequence $(v_n)_n$ of odd integers such that
    for each $n\in \bbN$, there exists an odd $u_n\in \bbN$ with
    \[
        \modulus{v_n\alpha-u_n}\leq \frac{2}{v_n}.
    \]
    As $\alpha = [a_0; a_1, a_2, \ldots]$ is badly approximable, by Lemma~\ref{lem:oddoddconsant}, we find $C\geq 0$ such that
    $
    	v_{n+1} 
    	\leq Cv_n
    $
    for all $n\in \N$.
    
    Next, let $\eta \geq 2$ and $\ell\in \N$ be maximal with $v_{\ell} \leq  \eta - 1$.
    It follows from our choice of $(v_n)$ and $(u_n)$ and Theorem~\ref{thm:upplowh}
    that there is $C_2\geq 0$ such that 
    \begin{align}
        \inf_{t \in [-\eta, \eta]} h(t)
        &\leq C_2   \left(\min_{
            \substack{
            v \in [-(\eta -1), \eta -1] \textnormal{ odd}\\
            u \textnormal{ odd}
            }
            } \modulus{v \alpha - u}\right)^2 \nonumber\\
        & \leq C_2   |v_{\ell} \alpha - u_{\ell}|^2
        \leq  \frac{2C_2}{v_{\ell}^2}.
            \label{eq:infg_1}
    \end{align}
    Further, as $(v_n)_n$ is strictly increasing, our choices of $\ell,C$ ensure that $ \eta - 1 < v_{\ell + 1} \leq C v_{\ell}$. This implies 
    $
        v_{\ell} \geq c_1 \eta,
    $
    for $c_1 \coloneqq (2 C)^{-1}$. 
    Combining this with \eqref{eq:infg_1}, we infer
    \[
        \inf_{t \in [-\eta, \eta]} h(t)
        \leq   \frac{2C_2 }{(c_1 \eta)^2}
        =   \frac{C_3 }{\eta^2};
    \]
    where $C_3 = 2 C_2 c_1^{-2}$.
    This settles the lower bound for $\widetilde{m}_\alpha (\eta)$.
\end{proof}

\section{Conclusion}

We have revisited the stability problem for port-Hamiltonian systems and pro\-vided an estimate of the resolvent growth function in terms of the matrix norm of a suitable inverse of a derived quantity with which it was possible to characterise exponential stability and asymptotic stability in earlier findings. The precise estimate led to the consideration and analysis of a rather elementary port-Hamiltonian system, where many different examples for possible decay of the port-Hamiltonian semigroup can be provided. The core observation was that the resolvent growth can be reformulated in terms of Diophantine approximation results, that is, how well irrational numbers can be approximated by rationals.

\section*{Acknowledgements}

We thank David Seifert for useful a discussion relating to the optimality of the decay estimates for $C_0$-semigroups. The first author was funded by the Deutsche Forschungsgemeinschaft (DFG, German Research Foundation) -- 523942381. The third author was funded by the Austrian Federal Ministry of Education, Science and Research (BMBWF), grant no. SPA 01-080 MAJA.

\bibliographystyle{abbrvurl}
\bibliography{literature}

\pagebreak
\appendix

\section{Background on quantified semi-uniform stability}
    \label{appendix:stability-background}

This appendix is dedicated to recalling known results about quantitative decay rates for $C_0$-semigroups in terms of the resolvent. These results are applied to study non-uniform stability of a particular class of semigroups in Section~\ref{sec:exaeasy}. While versions of Theorem \ref{thm:lowerbound} below are known, and the statement as formulated here maybe be folklore for experts, we provide an explicit proof.

\begin{remark}
    \label{rem:inverse-log-sandwich}
    Let $\gamma, \delta:\bbR_+ \to (0,\infty)$ be increasing continuous functions such that there exist $c, C, t_0>0$ such that
    \[
        c \gamma(t) \le \delta(t) \le C \gamma(t) \text{ for all }t\ge t_0.
    \]
    \begin{enumerate}[\upshape (a)]
    \item For sufficiently large $s$,
    \[
        \gamma^{-1}\left(\frac{s}{C}\right)\le \delta^{-1}(s) \le \gamma^{-1}\left(\frac{s}{c}\right).
    \]

    \item There exists $d, D>0$ such that
    \[
        d \gamma_{\log}(\eta) \le \delta_{\log}(\eta) \le D \gamma_{\log}(\eta)
    \]
    for sufficiently large $\eta$.
    \end{enumerate}
\end{remark}

Throughout, let $(U(t))_{t\ge 0}$ be a strongly continuous semigroup of bounded linear operators on a Hilbert space $H$, with infinitesimal generator $-A$. A semigroup $(U(t))_{t\ge 0}$ is called bounded if $\sup_{t\ge0}\norm{U(t)}<\infty$. Denoting the resolvent set of $A$ by $\rho(A)$, we write  $\Res(\lambda,-A)\coloneqq (\lambda+A)^{-1}$ for the resolvent of $-A$ at $\lambda \in \rho(A)$. Assuming $i\bbR \subseteq \rho(A)$, introduce for $\eta\geq 0$, the function
\begin{equation}\tag{M}\label{eq:M}
    M(\eta)\coloneqq \sup_{t\in [-\eta,\eta]} \norm{\Res(\i t,-A)} 
\end{equation}
associated to the resolvent and
\[
  M_{\log}(\eta) \coloneqq M(\eta)\big(\log(1+M(\eta))+\log(1+\eta)\big).
\]
We recall the following celebrated quantified estimate for semi-uniform stability.

\begin{theorem}[{{Batty--Duyckaerts, \cite{BattyDuyckaerts2008}; see the formulation in \cite[Theorem~1.2]{BorichevTomilov2010}}}]
\label{thm:borichev-tomilov}
If $(U(t))_{t\ge 0}$ is bounded and $\i\bbR \subseteq \resSet(A)$, then there exist $c,t_0>0$ such that
\[
    \norm{U(t)A^{-1}}\leq \frac{c}{M_{\log}^{-1}(t/c)}\quad \text{for all }t\geq t_0.
\]
\end{theorem}

The other main source on characterising stability rates we present here is the main result of \cite{RozendaalSeifertStahn2019}. For this, recall that a continuous function $\gamma:\bbR_+\to (0,\infty)$ is said to be of \emph{positive increase} if there are $\alpha, t_0>0$ and $c\in (0,1]$ such that
\[
    \frac{ \gamma(\lambda t)}{\gamma(t)} \ge c \lambda^{\alpha} \quad \text{whenever }\lambda\ge 1 \text{ and }t\ge t_0.
\]

\begin{theorem}[{{Rozendaal--Seifert--Stahn, \cite[Theorem~1.1]{RozendaalSeifertStahn2019}}}]\label{thm:RSS11} If $(U(t))_{t\ge 0}$ is bounded, $\i\bbR\subseteq \resSet(A)$, and the function associated to the resolvent in~\eqref{eq:M} is of positive increase, then there exist $c,C, t_0>0$ such that 
\[
    \frac{c}{M^{-1}(t)}\le \norm{U(t)A^{-1}}\leq \frac{C}{M^{-1}(t)}\quad\text{for all }t\geq t_0.
\]
\end{theorem}

In applications it might be challenging to directly identify $M$ to be of positive increase, let alone the precise estimates for $M$. Thus, concerning applications, the following theorem is of somewhat more direct relevance.

\begin{theorem}[{{Rozendaal--Seifert--Stahn, \cite[Theorem~3.2]{RozendaalSeifertStahn2019}}}]\label{thm:RSS32}  Assume that $(U(t))_{t\ge 0}$ is bounded, $\i \bbR\subseteq \rho(A)$, and let $\gamma\colon \bbR_+\to (0,\infty)$ be a continuous and increasing function of positive increase. If 
\[
    \norm{\Res(\i s,-A)}\leq \gamma(\modulus{s}) \quad \text{for all } s\in \bbR,
\] 
then there exist $C, t_0>0$ such that 
\[
    \norm{U(t)A^{-1}}\leq \frac{C}{\gamma^{-1}(t)}\quad\text{for all }t\geq t_0.
\]
\end{theorem}

For a decreasing right-continuous function $N:\bbR_+ \to (0,\infty)$, we define the function $N^*: (0,\infty) \to \bbR_+$ by
\[
    N^*(s) \coloneqq \inf\{t\geq 0: N(t)\leq s\}\quad\text{for }s>0.
\]
This function has the following properties that we use without quoting.
\begin{enumerate}[(i)]
    \item $N(N^*(s))\leq s$:~indeed, fixing $s>0$ and $t_0:=N^*(s)$, we note that, since $N$ is decreasing, every $\tau>t_0$ lies in the set $\{t\ge 0:N(t)\le s\}$. Passing to the limit $\tau\downarrow t_0$, right-continuity of $N$ implies that $N(t_0)\le s$.

    \item $N(N^*(s))=s \Leftrightarrow s\in \ran(N)$:~indeed if $s \in \ran(N)$, then there exists $t\ge 0$ such that $N(t)=s$. Thus, $N^*(s)\le t$ which because $N$ is decreasing implies $N(N^*(s))\ge N(t)=s$.

    \item $N(t)>s \Leftrightarrow t<N^*(s)$.
\end{enumerate}

Apart from Theorem~\ref{thm:RSS11}, we also comment on the optimality of the stability estimates. The following theorem for continuous $N$ is given in \cite[Theorem~4.4.14(b)]{ABHN11}. The proof can be adapted for the right-continuous situation. We include the details for the sake of completion:

\begin{theorem}\label{thm:ABHN44b} 
Assume that $(U(t))_{t\ge 0}$ is bounded and $\i \bbR\subseteq \resSet(A)$. Consider a decreasing right-continuous function $N\colon \bbR_+\to (0,\infty)$ such that $\lim_{t\to\infty} N(t)=0$. 

If $\norm{U(t)A^{-1}}\leq N(t)$ for all $t\geq 0$, then for each $c\in (0,1)$, there exist $s_0,C\geq 0$ such that
\[
    \norm{\Res(\i s,-A)}\leq C N^{*}\left(\frac{c}{\modulus{s}}\right)\quad\text{whenever } \modulus{s}\geq s_0.
\]    
\end{theorem}

\begin{proof}
    Let $s\in \bbR$ and  $x\in \dom A$. For each $t\ge 0$,
    \begin{align*}
        \norm{U(t)x } = \norm{U(t)A^{-1}Ax} 
                       \le N(t) \norm{Ax} 
                       \le N(t)\big( \norm{(A+\i s)x } + \modulus{s}x\big).
    \end{align*}      
    Setting $K\coloneqq \sup_{t\ge 0} \norm{U(t)}$, we thus obtain for each $t\ge 0$ that
    \begin{align*}
        K t \norm{ (A+\i s)x } & \ge \norm{\int_0^t e^{-\i s \tau} U(\tau) (A+\i s)x~d\tau    }
                              = \norm{ x-e^{-\i st} U(t)x   }\\
                             & \ge \norm{x} - \norm{U(t)x}
                              \ge \norm{x} - N(t) \norm{(A+\i s)x } - \modulus{s}N(t)\norm{x}.
    \end{align*}
    Consequently, 
    \[
        \norm{(A+\i s)x } \ge \frac{ 1- \modulus{s} N(t) }{Kt + N(t)} \norm{x}\qquad \text{for all } t\ge 0.
    \]
    
    Fix $c'\in (c,1)$. Since $N$ is right-continuous and decreasing, for $t= N^*\left( c'\modulus{s}^{-1}\right)$, we have $N(t)\le c'\modulus{s}^{-1}$. Plugging this above, we get 
    \begin{align*}
        \norm{(A+\i s)x } \ge  \frac{ 1- c' }{K N^*\left( c'\modulus{s}^{-1}\right) + c'\modulus{s}^{-1}} \norm{x}.
    \end{align*}
    As $x\in \dom A$ was arbitrary and $\i s \in \resSet(A)$, we conclude
    \begin{align*}
        \norm{\Res(\i s, -A)} &\le \frac{K}{1-c'} N^*\left( c'\modulus{s}^{-1}\right) + \frac{c'}{1-c'} \modulus{s}^{-1} \\
                             &\le \frac{K}{1-c'} N^*\left( c\modulus{s}^{-1}\right) + \frac{c'}{1-c'} \modulus{s}^{-1};
    \end{align*}
    the second inequality uses $N^*$ is decreasing and $c'>c$. Now as $\lim_{t\to \infty}N(t)=0$, so $\lim_{\modulus{s}\to \infty} N^*\left( c\modulus{s}^{-1}\right)=\infty$. On the other hand, $ \lim_{\modulus{s}\to \infty} \frac{c'}{1-c'} \modulus{s}^{-1} =0$. Thus we can find $s_0, C\ge 0$ such that $\norm{\Res(\i s, -A)} \le C N^*\left( c\modulus{s}^{-1}\right)$ whenever $\modulus{s} \ge s_0$.
\end{proof}

The above theorem implies the following lower bound estimate for the trajectories (note that this estimate is the one for the lower bound in Theorem~\ref{thm:RSS11} above).

\begin{theorem}\label{thm:lowerbound} 
    Assume that $(U(t))_{t\ge 0}$ is bounded, $\i \bbR\subseteq \resSet(A)$, and that the function associated to the resolvent defined in~\eqref{eq:M} is unbounded.
    
    Then there exist $c,C,t_0>0$ such that
    \[
        \|U(t)A^{-1}\|\geq \frac{c}{M^{-1}(Ct)} \quad\text{for all }t\ge t_0.
    \]
\end{theorem}

The assertion seems to be standard and was already observed in \cite[pg~282]{ABHN11}. However, it was difficult for us to find an explicit proof other than the related \cite[Corollary~6.11]{BattyChillTomilov2016} or \cite[Proposition~5.3]{CPSST23}. For convenience, we provide the proof:

\begin{proof}[Proof of Theorem~\ref{thm:lowerbound}] 
    We define
    \begin{alignat*}{2}
        N(t) &\coloneqq \sup\{\|U(\tau)A^{-1}\|: \tau> t\} \quad &&\text{for } t\ge 0\quad \text{and}\\
        N^*(s) &\coloneqq \inf\{t\geq 0: N(t)\leq s\}\quad&&\text{for }s>0.
    \end{alignat*}
    Since the semigroup $U$ is bounded, the semigroup law yields
    \[
        N(t) = \sup_{r> 0} \norm{U(r+t)A^{-1}}
            \le \norm{U(t)A^{-1}}\sup_{r> 0}\norm{U(r)}
    \]
    for all $t\ge0$. Therefore, in order to prove the lower bound on $\norm{U(t)A^{-1}}$, it suffices to show it for $N(t)$.
    
    Clearly, $N$ is monotonically decreasing and $\lim_{t\to\infty} N(t)=0$. Moreover, $N$ is right-continuous since for $t_n \downarrow t$, the sets $\{\tau > t_n\}$ are increasing in $n$, whence
    \[
        \lim_n N(t_n) 
                      = \sup_{n\in\N}\sup\{ \norm{U(\tau)A^{-1} }: \tau \in  (t_n,\infty)\}
                      = \sup\{\norm{U(\tau)A^{-1}}: \tau> t\} 
                      = N(t).
    \]
    Thus, by Theorem~\ref{thm:ABHN44b}, for each $c\in (0,1)$ there exist $s_0,C_1\geq 0$ such that
    \[
        \norm{\Res(\i s,-A)}\leq C_1 N^{*}\left(\frac{c}{\modulus{s}}\right)\quad\text{whenever } \modulus{s}\geq s_0.
    \]
    
    By increasing $s_0$, we may assume that $N(0)>\frac{c}{s_0}$ or equivalently, $N^*(\frac{c}{s_0})> 0$.
    Letting $s\in \R$ with $\modulus{s}\ge s_0$ and defining
    \[
        C_0:= \sup_{\modulus{t} \le s_0} \norm{\Res(it, -A) }<\infty,
    \]
    for each $\modulus{s}\ge s_0$, the function defined in~\eqref{eq:M} satisfies
    \begin{align*}
       M(\modulus{s})= \sup_{t \in [-\modulus{s},\modulus{s}]}\|\mathcal{R}(it,-A)\| 
                    \le{}& \max\left\{C_0, \sup_{\modulus{t}\ge s_0} C_1 N^{*}\left(\frac{c}{\modulus{t}}\right)\right\}\\ 
                    ={}& \max\left\{C_0, C_1 N^{*}\left(\frac{c}{\modulus{s}}\right)\right\}\\
                    \leq{}&K N^{*}\left(\frac{c}{\modulus{s}}\right)
    \end{align*}
    with $K\coloneqq \max\left\{C_{0}N^{*}(\tfrac{c}{s_{0}})^{-1},C_{1}\right\}$, 
    where we used that $N^*$ is decreasing.
    Choose an arbitrary $C>K$. Since $M$ is increasing, unbounded and continuous, there exists $t_{0}>0$ such that for every $t\ge t_{0}$ there exists $s_{t}\ge s_{0}$ with $Ct=M(s_{t})$. 
    Then for each $t\ge t_0$, we have
    \[
        N^*\left(\frac{c}{s_t}\right)> \frac{K}{C} N^*\left(\frac{c}{s_t}\right)
                                     \ge \frac{1}{C}M(s_t)  
                                     =t
    \]
    or equivalently,
    \[
        N(t)>\frac{c}{s_t} \ge \frac{c}{M^{-1}(Ct)};
    \]
    using the fact that $M^{-1}(Ct) = M^{-1}(M(s_t)) \ge s_t$ from \cite[Lemma~1(g)(6)]{Wacker2023}.
\end{proof}
Finally, we recall an adapted optimality statement from \cite{CPSST23}.

\begin{proposition}[{{\cite[Proposition~5.3]{CPSST23}}}] 
    \label{prop:CPSST23-53}
Suppose that $(U(t))_{t\ge 0}$ is bounded with $\i \bbR\subseteq \rho(A)$. Let $\gamma\colon \bbR_+\to (0,\infty)$ be continuous, increasing, and unbounded. If 
\[
   \limsup_{\modulus{s}\to\infty} \frac{\norm{\Res(\i s,-A)}}{\gamma(\modulus{s})}>0,
\]then there exists $c>0$ such that
\[
   \limsup_{t\to\infty} \gamma^{-1}(ct)\norm{U(t)A^{-1}}>0.
\]
Moreover, if $\gamma$ has positive increase, one may choose any $c>0$.
\end{proposition}

\section{Background on Diophantine Approximation}
    \label{appendix:diophantine}

In this appendix, we summarise some background information on Diophantine approximations that are used in the main body of the text. Recall the continued fraction expansion from Definition~\ref{def:continued_frac}.

\begin{lemma}[{See e.g.\ \cite[pg~45]{Baker_NT}}]\label{lem:cont_frac_recurrence}
The convergents ${p_n}/{q_n}=[a_0;a_1,\ldots,a_n]$ to a real number $\alpha = [a_0;a_1,a_2,\dots]$ are given by the recurrence $p_0=a_0$, $q_0=1$, $p_1=a_0 a_1+1$, $q_1=a_1$ and
\[
	p_n=a_n p_{n-1}+p_{n-2}, \quad
	q_n=a_n q_{n-1}+q_{n-2}, \quad \text{for } n\geq2.
\]
Moreover, they satisfy the formula
\[
    p_n q_{n+1} - p_{n+1} q_n
    = (-1)^{n+1}.
\]
\end{lemma}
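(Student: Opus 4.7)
Both assertions are classical and follow by induction on $n$, so the plan is to organise two inductions carefully; there is no real obstacle beyond bookkeeping, but one has to be careful that the inductive hypothesis is robust enough to allow the last partial quotient to be an arbitrary positive real rather than a positive integer.

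For the recurrence, the plan is to first check the base cases $n=0, 1$ directly from the definition: $[a_0] = a_0/1$ gives $p_0 = a_0$, $q_0 = 1$, and $[a_0;a_1] = a_0 + 1/a_1 = (a_0 a_1 + 1)/a_1$ gives $p_1 = a_0 a_1 + 1$, $q_1 = a_1$. For the inductive step, the key observation is the identity
\[
   [a_0; a_1,\ldots, a_{n-1}, a_n] = \bigl[a_0; a_1,\ldots, a_{n-1} + 1/a_n\bigr],
\]
which follows directly from the definition of a (finite) continued fraction. Thus I would strengthen the inductive hypothesis to the following statement: for any real numbers $b_0, b_1,\ldots,b_{n-1}$ with $b_i>0$ for $i\geq 1$, the quantities $\tilde p_k, \tilde q_k$ defined by the same recurrence satisfy $[b_0; b_1,\ldots,b_{n-1}] = \tilde p_{n-1}/\tilde q_{n-1}$. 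Applying this stronger hypothesis to the tuple $(a_0,\ldots, a_{n-2}, a_{n-1}+1/a_n)$, whose associated $\tilde p_{k}, \tilde q_{k}$ agree with $p_k, q_k$ for $k\leq n-2$, a short computation yields
\[
    [a_0;a_1,\ldots,a_n] = \frac{(a_{n-1}+1/a_n)p_{n-2}+p_{n-3}}{(a_{n-1}+1/a_n)q_{n-2}+q_{n-3}} = \frac{a_n p_{n-1}+p_{n-2}}{a_n q_{n-1}+q_{n-2}},
\]
which is the claimed recurrence.

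For the determinant-like identity $p_n q_{n+1} - p_{n+1}q_n = (-1)^{n+1}$, the plan is a second induction using only the recurrence established above. The base case $n=0$ is the direct calculation $p_0 q_1 - p_1 q_0 = a_0 a_1 - (a_0 a_1 + 1) = -1$. For the inductive step, substitute the recurrence into the telescoping computation
\[
    p_n q_{n+1} - p_{n+1} q_n = p_n(a_{n+1}q_n + q_{n-1}) - (a_{n+1}p_n + p_{n-1})q_n = -(p_{n-1}q_n - p_n q_{n-1}),
\]
which, combined with the inductive hypothesis for $n-1$, gives the claimed sign flip. The hardest part is really just being careful that the stronger inductive hypothesis in the first induction is formulated properly so as to allow $b_{n-1}$ to be a non-integer real number; once that is in place, everything else is a direct computation.
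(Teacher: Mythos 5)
Your proof is correct and is exactly the classical textbook argument (strengthened induction over real partial quotients for the recurrence, then a second induction for the identity $p_nq_{n+1}-p_{n+1}q_n=(-1)^{n+1}$); the paper itself offers no proof, citing Baker, and the cited source proves it in the same way. The only cosmetic point is the appearance of $p_{n-3},q_{n-3}$ in your displayed inductive step, which for $n=2$ should either be handled via the base formula $[b_0;b_1]=(b_0b_1+1)/b_1$ or by adopting the usual convention $p_{-1}=1$, $q_{-1}=0$.
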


\begin{lemma}[{See e.g.\ \cite[pg~47]{Baker_NT}}]\label{lem:cont_frac_bounds}
The convergents $p_n/q_n$ to $\alpha = [a_0; a_1, a_2, \ldots] \in \bbR$ satisfy
\[
	\frac{1}{(a_{n+1}+2)   q_n^2}
	< \modulus{ \alpha - \frac{p_n}{q_n} }
	< \frac{1}{a_{n+1}   q_n^2}.
\]
\end{lemma}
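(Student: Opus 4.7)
The plan is to derive the two-sided estimate directly from the extended recurrence for the continued fraction, treating the $n$-th ``tail'' $\alpha_{n+1}$ of $\alpha$ as a real variable rather than an integer truncation. Concretely, the first step is to show that
\[
   \alpha = \frac{\alpha_{n+1} p_n + p_{n-1}}{\alpha_{n+1} q_n + q_{n-1}},
\]
where $\alpha_{n+1}$ is the real number defined by $\alpha = [a_0; a_1, \ldots, a_n, \alpha_{n+1}]$ via the continued fraction algorithm (i.e.\ $\alpha_{n+1} = a_{n+1} + 1/\alpha_{n+2}$ when $\alpha$ is irrational). This identity follows by induction on $n$ from the recurrences $p_n = a_n p_{n-1} + p_{n-2}$ and $q_n = a_n q_{n-1} + q_{n-2}$ in Lemma~\ref{lem:cont_frac_recurrence}, since formally replacing $a_{n+1}$ by $\alpha_{n+1}$ turns the $(n{+}1)$-th convergent into $\alpha$ itself.

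Next, I would subtract $p_n/q_n$ and use the determinant identity $p_n q_{n-1} - p_{n-1} q_n = (-1)^{n-1}$ (an immediate consequence of the displayed identity in Lemma~\ref{lem:cont_frac_recurrence}, shifted by one index) to obtain
\[
   \alpha - \frac{p_n}{q_n}
   = \frac{(\alpha_{n+1} p_n + p_{n-1}) q_n - p_n (\alpha_{n+1} q_n + q_{n-1})}{q_n (\alpha_{n+1} q_n + q_{n-1})}
   = \frac{(-1)^{n}}{q_n (\alpha_{n+1} q_n + q_{n-1})}.
\]
Taking absolute values reduces the problem to bounding the positive quantity $\alpha_{n+1} q_n + q_{n-1}$ from above and below in terms of $a_{n+1}$ and $q_n$.

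For that, I would invoke the two elementary facts that $a_{n+1} \leq \alpha_{n+1} < a_{n+1} + 1$ (from the definition of the continued fraction algorithm, with strict upper inequality for irrational $\alpha$) and $0 < q_{n-1} < q_n$ for $n \geq 1$ (from the recurrence, since $a_k \geq 1$ for $k \geq 1$). The lower bound $\alpha_{n+1} q_n + q_{n-1} > a_{n+1} q_n$ then yields
\[
   \modulus{\alpha - \tfrac{p_n}{q_n}} < \frac{1}{a_{n+1}\, q_n^2},
\]
while $\alpha_{n+1} q_n + q_{n-1} < (a_{n+1} + 1)\, q_n + q_{n-1} < (a_{n+1} + 2)\, q_n$ gives
\[
   \modulus{\alpha - \tfrac{p_n}{q_n}} > \frac{1}{(a_{n+1} + 2)\, q_n^2}.
\]
The only non-routine point is the identity in the first step; the rest are algebraic manipulations. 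I would not expect genuine difficulty here, which is consistent with citing the standard reference \cite{Baker_NT}.
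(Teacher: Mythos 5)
Your proposal is correct and is exactly the standard argument behind the cited reference: the paper itself gives no proof of Lemma~\ref{lem:cont_frac_bounds} (it simply cites \cite[pg~47]{Baker_NT}), and your route via $\alpha = (\alpha_{n+1}p_n + p_{n-1})/(\alpha_{n+1}q_n + q_{n-1})$, the determinant identity, and the bounds $a_{n+1} \le \alpha_{n+1} < a_{n+1}+1$, $0 < q_{n-1} \le q_n$ is that textbook proof. The only cosmetic point is the step $(a_{n+1}+1)q_n + q_{n-1} < (a_{n+1}+2)q_n$, which uses $q_{n-1} < q_n$ and can fail when $q_0 = q_1 = 1$; the conclusion still holds there because the strictness already comes from $\alpha_{n+1} < a_{n+1}+1$.
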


\begin{lemma}[{See e.g.\ \cite[pg~47]{Baker_NT}}]\label{lem:best_approximations}
Let $p_1/q_1, p_2/q_2, \ldots$ be the convergents to a real number $\alpha$.
Then for any integers $p, q$ with $0 < q < q_{n+1}$, we have
\[
    \modulus{q\alpha - p} \geq \modulus{q_n\alpha - p_n}.
\]
\end{lemma}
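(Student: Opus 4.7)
The plan is a classical contradiction argument relying on the determinant identity from Lemma~\ref{lem:cont_frac_recurrence}. Suppose toward a contradiction that there exist integers $p,q$ with $0 < q < q_{n+1}$ and $\modulus{q\alpha - p} < \modulus{q_n\alpha - p_n}$. Since $p_n q_{n+1} - p_{n+1} q_n = (-1)^{n+1} \in \{-1,+1\}$, the $2\times 2$ integer matrix with columns $(p_n, q_n)^\top$ and $(p_{n+1}, q_{n+1})^\top$ is unimodular, so there exist unique integers $u, v$ with
\[
  u\begin{pmatrix} p_n \\ q_n \end{pmatrix} + v\begin{pmatrix} p_{n+1} \\ q_{n+1} \end{pmatrix} = \begin{pmatrix} p \\ q \end{pmatrix}.
\]

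Next, I would rule out all configurations except $uv < 0$. If $v = 0$, then $(p,q) = u(p_n, q_n)$ and $\modulus{q\alpha - p} = \modulus{u}\,\modulus{q_n\alpha - p_n} \ge \modulus{q_n\alpha - p_n}$, contradicting the standing assumption. If $u = 0$, then $q = v q_{n+1}$ with $v \in \bbZ$, impossible for $0 < q < q_{n+1}$. If $u, v \ge 1$ then $q \ge q_n + q_{n+1} > q_{n+1}$, and if $u, v \le -1$ then $q \le -q_{n+1} < 0$; both cases violate the hypothesis on $q$. Hence $u$ and $v$ are nonzero integers of opposite signs.

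To close the argument, I would invoke the standard alternating-sign property of continued-fraction convergents, $\sgn(q_k\alpha - p_k) = (-1)^k$, which follows from the tail representation $\alpha = (\alpha_{k+1} p_k + p_{k-1})/(\alpha_{k+1} q_k + q_{k-1})$ with tail $\alpha_{k+1} > 1$ combined with the determinant identity of Lemma~\ref{lem:cont_frac_recurrence}, giving the clean formula $q_k\alpha - p_k = (-1)^k/(\alpha_{k+1} q_k + q_{k-1})$. Since $uv < 0$, the two summands in the decomposition $q\alpha - p = u(q_n\alpha - p_n) + v(q_{n+1}\alpha - p_{n+1})$ carry the same sign, so
\[
  \modulus{q\alpha - p} = \modulus{u}\,\modulus{q_n\alpha - p_n} + \modulus{v}\,\modulus{q_{n+1}\alpha - p_{n+1}} \ge \modulus{q_n\alpha - p_n},
\]
contradicting the standing assumption. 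The only mild obstacle I foresee is justifying the sign-alternation property, which lies slightly outside what Lemmas~\ref{lem:cont_frac_recurrence}--\ref{lem:cont_frac_bounds} state verbatim; the one-line tail-formula derivation above supplies it in a self-contained way.
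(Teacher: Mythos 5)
Your proposal is correct, and it is essentially the standard argument from the cited source (the paper gives no proof of its own for this lemma, referring to Baker): unimodular decomposition of $(p,q)$ in terms of two consecutive convergents, elimination of all sign patterns except $uv<0$, and the alternating-sign formula $q_k\alpha - p_k = (-1)^k/(\alpha_{k+1}q_k + q_{k-1})$ to conclude $\modulus{q\alpha - p} \ge \modulus{q_n\alpha - p_n}$. The only cosmetic caveat is that your tail formula presupposes the continued fraction has not terminated, i.e.\ it covers irrational $\alpha$ (and rational $\alpha$ away from the last convergent), which is all the paper ever uses.
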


\begin{lemma}[{Legendre's theorem \cite[pg 27--29]{Legendre1798}}]\label{lem:Legendre}
For $\alpha \in\bbR$ and coprime integers $p,q$ with 
\[
	q>0\quad\text{and}\quad\modulus{ \alpha - \frac{p}{q} }
	< \frac{1}{2q^2},
\] 
we have $p/q$ is a convergent to $\alpha$.
\end{lemma}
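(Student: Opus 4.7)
The plan is a classical continued-fractions argument. If $\alpha = p/q$ there is nothing to prove, so I would assume $\alpha \neq p/q$ and write
\[
    \alpha - \frac{p}{q} = \frac{\theta}{q^2}, \qquad 0 < \modulus{\theta} < \tfrac{1}{2}.
\]
I would then expand the reduced fraction $p/q$ as a finite continued fraction $p/q = [a_0; a_1, \ldots, a_n] = p_n/q_n$ with $p_n = p$ and $q_n = q$. Since every rational admits two such expansions differing in length by $1$ — via $[a_0; \ldots, a_n] = [a_0; \ldots, a_n - 1, 1]$, and analogously $[a_0] = [a_0 - 1; 1]$ — I can choose the parity of $n$ so that $(-1)^n$ has the same sign as $\theta$, that is, $(-1)^n/\theta > 0$.

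Next, I would introduce a real parameter $\omega$ through the M\"obius relation
\[
    \alpha = \frac{\omega p_n + p_{n-1}}{\omega q_n + q_{n-1}}.
\]
Once $\omega > 1$ is established, this identity says precisely that $\alpha$ admits the generalised continued fraction $[a_0; a_1, \ldots, a_n, \omega]$ with real tail exceeding $1$, and so $p/q = p_n/q_n$ must be the $n$-th convergent of $\alpha$. Solving for $\omega$ and substituting $q_n\alpha - p_n = \theta/q$ together with the identity $p_n q_{n-1} - p_{n-1} q_n = (-1)^{n-1}$ from Lemma~\ref{lem:cont_frac_recurrence} — which also yields $q_{n-1}\alpha - p_{n-1} = (-1)^{n-1}/q + q_{n-1}\theta/q^2$ — gives after a short simplification
\[
    \omega \;=\; -\frac{q_{n-1}\alpha - p_{n-1}}{q_n\alpha - p_n} \;=\; \frac{(-1)^n}{\theta} - \frac{q_{n-1}}{q}.
\]

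To conclude I would verify $\omega > 1$. The parity choice forces $(-1)^n/\theta = 1/\modulus{\theta} > 2$, while $q_{n-1}/q_n \leq 1$ always (the denominators $q_k$ are non-decreasing along convergents by the recurrence in Lemma~\ref{lem:cont_frac_recurrence}), so $\omega > 2 - 1 = 1$, as required. The only step I expect to need care with is the sign/parity bookkeeping — the freedom to toggle the length of the finite continued fraction expansion of $p/q$ by $1$ so that $(-1)^n\theta > 0$ can be enforced; after that the derivation of $\omega$ is routine algebra built solely on the recurrences and the determinantal identity recorded in Lemma~\ref{lem:cont_frac_recurrence}.
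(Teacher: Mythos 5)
Your argument is correct: it is the standard classical proof of Legendre's theorem (choose the finite expansion $p/q=[a_0;\dots,a_n]$ with the parity of $n$ matched to the sign of $\theta$, solve for the real tail $\omega$, and check $\omega>1$ using $\modulus{\theta}<\tfrac12$ and $q_{n-1}\leq q_n$), and the algebra, including $\omega=\frac{(-1)^n}{\theta}-\frac{q_{n-1}}{q_n}$ via the determinantal identity, is right. The paper itself gives no proof — it cites the result as classical (Legendre, 1798) in the appendix — so there is nothing to compare against; the only detail worth making explicit in a written version is the $n=0$ case, where you should invoke the convention $p_{-1}=1$, $q_{-1}=0$ (not covered by the recurrences as stated in Lemma~\ref{lem:cont_frac_recurrence}) so that the formula for $\omega$ still applies.
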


\begin{lemma}[{Corollary of Khintchin's approximation theorem \cite[Satz II]{Khintchine1924}}]\label{lem:Khintchin}
Let $\eps >0$. Then for  Lebesgue a.e.~$\alpha \in \R$,  the inequality
\[
	\modulus{ \alpha - \frac{p}{q} }
	\leq 
	\frac{1}{q^2 (\log q)^{1+\eps}}
\]
has only finitely many solutions $(p,q) \in \Z^2$. 
\end{lemma}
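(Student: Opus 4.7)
The plan is to proceed by the classical first Borel--Cantelli argument, which gives the convergence direction of Khintchine's approximation theorem specialised to the rate $\psi(q) = 1/(q(\log q)^{1+\varepsilon})$.

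First I would note that the event being studied is invariant under integer translations of $\alpha$ (if $\alpha$ is replaced by $\alpha+k$, then $p$ is replaced by $p+kq$ and the inequality is unchanged), so it suffices to prove the statement for a.e.\ $\alpha \in [0,1]$. For each integer $q\geq 2$, I introduce the exceptional set
\[
    E_q \coloneqq \Bigl\{\alpha \in [0,1] : \exists\, p \in \Z \text{ with } \Bigl|\alpha - \tfrac{p}{q}\Bigr| \leq \tfrac{1}{q^2(\log q)^{1+\varepsilon}}\Bigr\}.
\]
Since only the values $p \in \{0,1,\ldots,q\}$ can contribute a non-empty interval inside $[0,1]$, the set $E_q$ is a union of at most $q+1$ intervals, each of length $2/(q^2(\log q)^{1+\varepsilon})$, giving the bound
\[
    \lambda(E_q) \leq \frac{2(q+1)}{q^2(\log q)^{1+\varepsilon}} \leq \frac{4}{q(\log q)^{1+\varepsilon}}.
\]

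Next I would sum over $q$. By the integral test, $\sum_{q\geq 2} 1/(q(\log q)^{1+\varepsilon}) < \infty$ precisely because $\varepsilon>0$, hence $\sum_{q\geq 2} \lambda(E_q) < \infty$. The first Borel--Cantelli lemma then gives $\lambda(\limsup_{q\to\infty} E_q) = 0$. Equivalently, for a.e.\ $\alpha \in [0,1]$ there are only finitely many $q \geq 2$ such that $\alpha \in E_q$; together with the translation invariance step, this means there are only finitely many pairs $(p,q)$ with $q\geq 2$ satisfying the target inequality. Finally, the finitely many pairs with $q\in\{-1,0,1\}$ or $q \leq -2$ are handled trivially (the case $q=0$ is vacuous; for the remaining $|q|=1$ there is at most one $p$ per choice, and negative $q$ reduce to positive ones after negating both numerator and denominator).

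There is no real obstacle in this argument; the only point requiring mild care is the simple geometric counting that produces the factor $q+1$ in the measure bound and the verification of convergence of $\sum 1/(q(\log q)^{1+\varepsilon})$, both of which are elementary. The statement could also be quoted directly from the convergence half of Khintchine's theorem in \cite{Khintchine1924}, so the self-contained proof above is offered as a convenience in the Appendix.
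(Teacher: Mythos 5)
Your proposal is correct. Note that the paper offers no proof of this lemma at all: it is quoted verbatim as a corollary of the convergence half of Khintchine's approximation theorem (Satz II of the cited 1924 paper), applied with $\psi(q)=1/\bigl(q(\log q)^{1+\varepsilon}\bigr)$, whose sum converges precisely because $\varepsilon>0$. What you have written out is exactly the standard first Borel--Cantelli proof of that convergence half, specialised to this $\psi$; so your route is compatible with the paper's, the only difference being that you make the citation self-contained at the cost of half a page, which is a reasonable trade if one wants the appendix to be independent of the reference. Two small points you should tighten if the argument were actually included: (i) the bound $\lambda(E_q)\le 2(q+1)/\bigl(q^2(\log q)^{1+\varepsilon}\bigr)$ tacitly uses that only $p\in\{0,\dots,q\}$ produce intervals meeting $[0,1]$, which needs the radius $1/\bigl(q^2(\log q)^{1+\varepsilon}\bigr)$ to be at most $1/q$; since $\log 2<1$, this can fail for $q=2$ when $\varepsilon$ is large, but it holds for all $q\ge 3$, and changing finitely many terms of the series is harmless for Borel--Cantelli; (ii) the conclusion ``finitely many $q$ occur'' should be upgraded to ``finitely many pairs $(p,q)$'' by remarking that for fixed $q$ and fixed $\alpha$ at most two integers $p$ can satisfy the inequality. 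Neither point is a genuine gap.
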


It was proven in \cite[Theorem~II]{KuipersMeulenbeld1952} that there always exist quadratically good approximations $u/v$ even if we restrict $u$ and $v$ to a certain parity.

\begin{lemma}[{odd/odd approximations, see \cite[Theorem~II]{KuipersMeulenbeld1952}}]\label{lem:odd-odd-approx}
    For every irrational $\alpha \in \bbR$, there exist infinitely many rationals $u/v$ of the shape odd/odd such that
    \begin{equation}\label{eq:odd-odd-approx}
    	\modulus{ \alpha - \frac{u}{v} }
    	\leq 
    	\frac{1}{v^2}.
    \end{equation}
\end{lemma}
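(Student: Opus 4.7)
The plan is to translate the question about odd/odd rational approximants of $\alpha$ into a question about rational approximants with \emph{odd denominator} of a closely related irrational, and then to exploit the theory of continued fractions together with a parity argument. Fix any odd integer $n_0 \in \mathbb{Z}$ and set $\gamma \coloneqq (\alpha - n_0)/2$, which is also irrational. For integers $p,q$, put $u \coloneqq 2p + n_0 q$ and $v \coloneqq q$; then
\[
    \left|\alpha - \frac{u}{v}\right|
    = 2\left|\gamma - \frac{p}{q}\right|,
\]
and $u$ is odd precisely when $q$ is odd (since $n_0$ is odd). A short check also shows $\gcd(u,v)=1$ whenever $\gcd(p,q)=1$ and $q$ is odd. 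Thus it suffices to exhibit infinitely many convergents $p_n/q_n$ of $\gamma$ (see Definition~\ref{def:continued_frac}) with $q_n$ odd and satisfying the \emph{sharpened} bound $|\gamma - p_n/q_n| \le 1/(2 q_n^2)$.

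Two basic observations feed into the argument. First, since consecutive convergents lie on opposite sides of $\gamma$, the identity $|\gamma - p_n/q_n| + |\gamma - p_{n+1}/q_{n+1}| = 1/(q_n q_{n+1})$ holds; if both summands exceeded $1/(2q_n^2)$ and $1/(2q_{n+1}^2)$ respectively, one would obtain $(q_{n+1} - q_n)^2 \le 0$, contradicting the strict monotonicity of $(q_n)_n$. Hence the sharpened bound holds at at least one of every two consecutive convergents. Second, the determinant identity $p_n q_{n+1} - p_{n+1} q_n = \pm 1$ from Lemma~\ref{lem:cont_frac_recurrence} rules out two consecutive denominators both being even. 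The crux of the proof is coupling these two ``every-other'' facts so that odd denominators and the sharpened approximation bound coincide at infinitely many indices.

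Assume, for a contradiction, that from some index $N_0$ onward the properties ``$q_n$ odd'' and ``$|\gamma - p_n/q_n| \le 1/(2 q_n^2)$'' never co-occur. Fix $n \ge N_0 + 1$ with $q_n$ odd. Applying the pair argument to $(n-1, n)$ and to $(n, n+1)$ forces the sharpened bound to hold at $n-1$ and at $n+1$; by hypothesis, this forces $q_{n-1}$ and $q_{n+1}$ to both be even. On the other hand, the failure of the sharpened bound at $n$ together with $|\gamma - p_n/q_n| < 1/(q_n q_{n+1})$ from Lemma~\ref{lem:cont_frac_bounds} yields $q_{n+1} < 2 q_n$, whence $a_{n+1} = 1$ by the recurrence. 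But then $q_{n+1} = a_{n+1} q_n + q_{n-1} = q_n + q_{n-1}$ is odd (odd $+$ even), contradicting $q_{n+1}$ even. Consequently, infinitely many convergents of $\gamma$ satisfy both properties, and setting $v \coloneqq q_n$, $u \coloneqq 2 p_n + n_0 q_n$ produces infinitely many odd/odd rationals $u/v$ with $|\alpha - u/v| \le 1/v^2$.

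The main obstacle is precisely the parity-versus-approximation interplay resolved in the last step: one must rule out the pathological scenario in which the ``good'' approximation slots are systematically occupied by even-denominator convergents, leaving odd-denominator convergents with only approximations worse than $1/(2q^2)$. The argument avoids any deep number-theoretic input (such as Hurwitz's theorem) by exploiting exactly the mod-$2$ structure of the recurrence for $(q_n)_n$ against the quantitative approximation estimate, producing an explicit internal contradiction.
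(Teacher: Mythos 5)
Your proof is correct, but it follows a genuinely different route from the paper: the paper does not prove Lemma~\ref{lem:odd-odd-approx} internally at all (it is quoted from \cite[Theorem~II]{KuipersMeulenbeld1952}), and the only kindred statement actually proved in the text is the weaker variant Lemma~\ref{lem:odd-odd-approx_ingrid} with bound $2/v^2$, obtained via Lemmas~\ref{lem:oddapp0} and~\ref{lem:oddapp1} by passing to the differences $u=p_{n+1}-p_n$, $v=q_{n+1}-q_n$ of consecutive convergents when neither is of shape odd/odd. You instead reduce odd/odd approximation of $\alpha$ to odd-denominator approximation of $\gamma=(\alpha-n_0)/2$ (the computation $|\alpha-u/v|=2|\gamma-p/q|$ and the parity bookkeeping are correct), and then couple Vahlen's ``one of two consecutive convergents satisfies $|\gamma-p_n/q_n|\le 1/(2q_n^2)$'' with the mod-$2$ structure of the recurrence; the contradiction ($q_{n-1},q_{n+1}$ even, $a_{n+1}=1$, hence $q_{n+1}=q_n+q_{n-1}$ odd) is sound. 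What your approach buys is a short, self-contained proof of the full strength $1/v^2$ of the cited result, whereas the paper's internal machinery only yields $2/v^2$ (which suffices for its purposes). Two cosmetic points: the inequality $|\gamma-p_n/q_n|<1/(q_nq_{n+1})$ is not what Lemma~\ref{lem:cont_frac_bounds} literally states, but it follows from the opposite-sides identity you already invoked — alternatively, combining the failure of the sharpened bound with the upper bound $1/(a_{n+1}q_n^2)$ of Lemma~\ref{lem:cont_frac_bounds} gives $a_{n+1}<2$ directly; and in the pair argument the strict failures yield $(q_{n+1}-q_n)^2<0$ outright, so no appeal to monotonicity of $(q_n)_n$ is needed.
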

We make use of insights related to the latter result in Section~\ref{sec:posinc}. In fact, the results used are gathered in the following two lemmata.

\begin{lemma}\label{lem:oddapp0} 
For $n\in \N$, let $\tfrac{p_n}{q_n},\tfrac{p_{n+1}}{q_{n+1}}$ be consecutive convergents to an irrational $\alpha \in \bbR$. At least one number in each pair $(q_n, q_{n+1})$ and $(p_n, p_{n+1})$ must be odd.
\end{lemma}
\begin{proof}
    Recall that by Lemma~\ref{lem:cont_frac_recurrence} we have $q_{n+1}=a_{n+1} q_{n}+q_{n-1}$. If both $q_{n+1}$ and $q_{n}$ are even, $q_{n-1}$ is even as well. By induction,  all denominators $q_{n+1}, q_{n}, \ldots, q_1, q_0$ are even, which contradicts $q_0 = 1$.

    Similarly, by Lemma~\ref{lem:cont_frac_recurrence}, both $p_{n+1}$ and $p_n$ being even implies $p_{n+1}, p_{n}, \ldots, p_1, p_0$ are even. This contradicts that  $p_0 = a_0$ and $p_1 = a_0a_1 + 1$ cannot both be even.
\end{proof}

\begin{lemma}\label{lem:oddapp1} 
    Let $\alpha\in \bbR$ be irrational. Let $n\in \N$, $n\geq 2$, and $\tfrac{p_{n-1}}{q_{n-1}}, \tfrac{p_n}{q_n},\tfrac{p_{n+1}}{q_{n+1}}$ be three consecutive convergents to $\alpha$. Then there exist odd integers $u,v$ such that
    \[
      \modulus{\alpha-\frac{u}{v}}< \frac{2}{v^2}
    \] with $q_{n-1}\leq v\leq q_{n+1}$.     
\end{lemma}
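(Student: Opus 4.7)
The plan is to either use one of the three convergents directly or, if none has the right parity, pass to the first semiconvergent $\frac{p_{n-1}+p_n}{q_{n-1}+q_n}$, which I will show is automatically odd over odd.

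First, recall that since $\gcd(p_k,q_k)=1$, each pair $(p_k,q_k)$ has parity in $\{(O,O),(O,E),(E,O)\}$. Suppose one of $\tfrac{p_{n-1}}{q_{n-1}},\tfrac{p_n}{q_n},\tfrac{p_{n+1}}{q_{n+1}}$ is of the shape odd/odd; call it $\tfrac{p_k}{q_k}$. Then by Lemma~\ref{lem:cont_frac_bounds} we have $\modulus{\alpha-p_k/q_k}<1/(a_{k+1}q_k^2)\le 1/q_k^2<2/q_k^2$, and $q_{n-1}\le q_k\le q_{n+1}$; taking $u=p_k$, $v=q_k$ concludes this case.

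Otherwise, each of the three convergents is of type $(O,E)$ or $(E,O)$. Focus on the two consecutive pairs $(p_{n-1},q_{n-1})$ and $(p_n,q_n)$. Lemma~\ref{lem:oddapp0} forbids both $q$'s being even and both $p$'s being even, so one of the two convergents is $(O,E)$ and the other is $(E,O)$. Setting
\[
    u\coloneqq p_{n-1}+p_n, \qquad v\coloneqq q_{n-1}+q_n,
\]
it follows that both $u$ and $v$ are odd. The range condition is straightforward: $v\ge q_{n-1}$, and by the recurrence in Lemma~\ref{lem:cont_frac_recurrence} together with $a_{n+1}\ge 1$,
\[
    v=q_{n-1}+q_n\le q_{n-1}+a_{n+1}q_n=q_{n+1}.
\]

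For the quality of approximation, I compute
\[
    v\alpha-u=(q_{n-1}\alpha-p_{n-1})+(q_n\alpha-p_n).
\]
The convergents alternate on opposite sides of $\alpha$, so the two summands on the right have opposite signs, giving $\modulus{v\alpha-u}\le\modulus{q_{n-1}\alpha-p_{n-1}}$. Combined with the standard estimate $\modulus{q_{n-1}\alpha-p_{n-1}}<1/q_n$ (which follows from Lemma~\ref{lem:cont_frac_bounds} since $q_n\le (a_n+1)q_{n-1}$ implies $1/q_{n-1}^2 a_n \le \ldots$ — more directly, from the formula $p_nq_{n-1}-p_{n-1}q_n=\pm 1$ and $\alpha$ being sandwiched), one obtains
\[
    \left|\alpha-\frac{u}{v}\right|<\frac{1}{q_n v}.
\]
Since $q_{n-1}<q_n$ for $n\ge 2$, we get $v=q_{n-1}+q_n<2q_n$, whence $1/(q_n v)<2/v^2$, as required.

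The only subtlety is the parity bookkeeping, and the key observation making it go through is that forbidding two consecutive $q$'s (respectively $p$'s) from both being even forces the two consecutive pairs $(p_{n-1},q_{n-1})$ and $(p_n,q_n)$—once neither is $(O,O)$—to have \emph{opposite} parity patterns, which is exactly what makes the semiconvergent sums odd. The approximation estimate is then the classical ``sum of errors with opposite signs'' argument for semiconvergents.
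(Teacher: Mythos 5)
Your proof is correct, but it takes a genuinely different route from the paper's. Where you exhibit the odd/odd fraction as the mediant of the two \emph{smaller} convergents, $u=p_{n-1}+p_n$, $v=q_{n-1}+q_n$, the paper instead takes the \emph{difference} of the two larger ones, $u=p_{n+1}-p_n$, $v=q_{n+1}-q_n$ (applying the parity Lemma~\ref{lem:oddapp0} to the pair $(p_n,q_n),(p_{n+1},q_{n+1})$ rather than to $(p_{n-1},q_{n-1}),(p_n,q_n)$), and then estimates $\modulus{\alpha-u/v}$ by the triangle inequality through $p_{n+1}/q_{n+1}$ together with the determinant identity $p_nq_{n+1}-p_{n+1}q_n=(-1)^{n+1}$ from Lemma~\ref{lem:cont_frac_recurrence}, obtaining $\modulus{\alpha-u/v}<1/q_{n+1}^2+1/\bigl(q_{n+1}(q_{n+1}-q_n)\bigr)<2/v^2$. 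Your estimate instead rests on the classical facts that consecutive errors $q_{k}\alpha-p_{k}$ alternate in sign and decrease in modulus, and that $\alpha$ is sandwiched between consecutive convergents (giving $\modulus{q_{n-1}\alpha-p_{n-1}}<1/q_n$); these are standard but are not among the lemmas collected in the paper's appendix, so to be fully self-contained you should note that the monotonicity $\modulus{q_n\alpha-p_n}\le\modulus{q_{n-1}\alpha-p_{n-1}}$ (needed to pass from ``opposite signs'' to the bound by the older error) follows from Lemma~\ref{lem:best_approximations}, and justify the sign alternation or the sandwich inequality by the determinant identity. The paper's computation buys self-containedness from its stated lemmas; yours buys the slightly cleaner classical semiconvergent picture and the sharper intermediate bound $\modulus{\alpha-u/v}<1/(q_nv)$. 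Your parity bookkeeping and the verification $q_{n-1}\le v\le q_{n+1}$ are both correct.
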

\begin{proof}
    Recall from Lemma~\ref{lem:cont_frac_bounds} that for every convergent $p_n/q_n$, we have
    \[
        \modulus{\alpha-\frac{p_n}{q_n}} < \frac{1}{q_n^2}.
    \]
    Therefore, the claim follows if either $p_n/q_n$ or $p_{n+1}/q_{n+1}$ are of the shape odd/odd. By Lemma~\ref{lem:oddapp0}, it suffices to discuss the following cases:~either $p_n/q_n = \odd/\even$ and $p_{n+1}/q_{n+1} = \even/\odd$ or $p_n/q_n = \even/\odd$ and $p_{n+1}/q_{n+1} = \odd/\even$. 
    
    In either case, set $u= p_{n+1} - p_{n}$ and $v =q_{n+1} - q_{n}$. Then $u$ and $v$ are odd and, using Lemma~\ref{lem:cont_frac_bounds} and Lemma~\ref{lem:cont_frac_recurrence}, we get
    \begin{align*}
    	\left| \alpha - \frac{u}{v} \right|
    	& \leq \left| \alpha - \frac{p_{n+1}}{q_{n+1}} \right| +
    		\left|\frac{p_{n+1}}{q_{n+1}} - \frac{p_{n+1} - p_{n}}{q_{n+1} - q_{n}} \right| \\
    	& \leq \frac{1}{q_{n+1}^2}
    		+ \frac{\left| p_{n+1} (q_{n+1} - q_{n}) - q_{n+1} (p_{n+1} - p_{n}) \right|}{q_{n+1} (q_{n+1} - q_{n})}\\
    	& = \frac{1}{q_{n+1}^2} +
    			\frac{\left| p_{n} q_{n+1} - p_{n+1} q_{n}\right|}{q_{n+1} (q_{n+1} - q_{n})}\\
    	& = \frac{1}{q_{n+1}^2} + \frac{1}{q_{n+1} (q_{n+1} - q_{n})}
    	< \frac{2}{(q_{n+1}- q_{n})^2}
    	= \frac{2}{v^2}.
    \end{align*}
    The estimate $q_{n-1}\leq q_{n+1}-q_n$, which follows from Lemma~\ref{lem:cont_frac_recurrence}, and the various choices for $v$ establish the remaining estimates. 
\end{proof}

With the latter two results, we may also prove a variant of Lemma~\ref{lem:odd-odd-approx} as follows.

\begin{lemma}\label{lem:odd-odd-approx_ingrid}
Let $\alpha \in \bbR$ be irrational. Then there exist infinitely many pairs of odd integers $u,v$ such that
\begin{equation}\label{eq:odd-odd-approx_2}
	\left| \alpha - \frac{u}{v} \right|
	< 
	\frac{2}{v^2}.
\end{equation}
\end{lemma}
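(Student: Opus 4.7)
The plan is to derive this lemma as an immediate corollary of Lemma~\ref{lem:oddapp1}, which has already done essentially all the work by producing, for each triple of consecutive convergents, an odd/odd approximation with controlled denominator.

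Concretely, I would proceed as follows. Applying Lemma~\ref{lem:oddapp1} to the triple of convergents $p_{n-1}/q_{n-1}, p_n/q_n, p_{n+1}/q_{n+1}$ for each $n\geq 2$, one obtains odd integers $u_n,v_n$ with
\[
   \left|\alpha-\frac{u_n}{v_n}\right|<\frac{2}{v_n^2}\quad\text{and}\quad q_{n-1}\leq v_n\leq q_{n+1}.
\]
It remains to argue that infinitely many distinct pairs $(u_n,v_n)$ arise from this construction. Since the denominators $q_n$ of the convergents to an irrational number tend to infinity (this follows from the recurrence $q_{n+1}=a_{n+1}q_n+q_{n-1}\geq q_n+q_{n-1}$ in Lemma~\ref{lem:cont_frac_recurrence}), the lower bound $v_n\geq q_{n-1}$ forces $v_n\to\infty$ as $n\to\infty$. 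In particular, the set $\{v_n:n\geq 2\}$ is unbounded, so it contains infinitely many distinct values; for each such value of $v$, the numerator $u$ is uniquely determined (up to the odd/odd constraint) by the inequality $|\alpha-u/v|<2/v^2$ once $v$ is large enough, so this yields infinitely many distinct pairs $(u,v)$ as required.

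Since the entire argument is a packaging step, there is no real obstacle: the combinatorial content about parity of numerators and denominators was already handled inside Lemma~\ref{lem:oddapp1} (via Lemma~\ref{lem:oddapp0} and the recurrences of Lemma~\ref{lem:cont_frac_recurrence}), and the quantitative estimate $2/v^2$ is inherited verbatim. The only thing to verify carefully is that $v_n\to\infty$, which follows at once from $q_n\to\infty$ together with $v_n\geq q_{n-1}$.
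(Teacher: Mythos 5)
Your proposal is correct and follows exactly the paper's route: the paper likewise deduces the lemma from Lemma~\ref{lem:oddapp1} combined with the fact that $q_n\to\infty$, so that the odd denominators produced are unbounded and hence infinitely many distinct pairs arise. (The aside about the numerator being uniquely determined is unnecessary -- distinctness of the $v$'s already gives distinct pairs -- but it does no harm.)
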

\begin{proof}
The claim follows from Lemma~\ref{lem:oddapp1} because $q_n\to \infty$ as $n\to\infty$.
\end{proof}

\end{document}